\def\@tocline#1#2#3#4#5#6#7{\relax
  \ifnum #1>\c@tocdepth 
  \else
    \par \addpenalty\@secpenalty\addvspace{#2}%
    \begingroup \hyphenpenalty\@M
    \@ifempty{#4}{%
      \@tempdima\csname r@tocindent\number#1\endcsname\relax
    }{%
      \@tempdima#4\relax
    }%
    \parindent\z@ \leftskip#3\relax \advance\leftskip\@tempdima\relax
    \rightskip\@pnumwidth plus4em \parfillskip-\@pnumwidth
    #5\leavevmode\hskip-\@tempdima
      \ifcase #1
      \or\or \hskip 2em \or \hskip 2em \else \hskip 3em \fi%
      #6\nobreak\relax
    \dotfill\hbox to\@pnumwidth{\@tocpagenum{#7}}\par
    \nobreak
    \endgroup
  \fi}
\newcommand{\A}{\mathbf{A}}
\newcommand{\G}{\mathbf{G}}
\newcommand{\N}{\mathbb{N}}
\renewcommand{\P}{\mathbf{P}}
\newcommand{\Z}{\mathbb{Z}}
\newcommand{\sK}{\mathcal{K}}
\newcommand{\sP}{\mathcal{P}}
\newcommand{\Cor}{\operatorname{\mathbf{Cor}}}
\newcommand{\HI}{\operatorname{\mathbf{HI}}}
\newcommand{\Ext}{\operatorname{Ext}}
\newcommand{\ul}[1]{{\underline{#1}}}
\newcommand{\NST}{\operatorname{\mathbf{NST}}}
\newcommand{\DM}{\operatorname{\mathbf{DM}}}
\newcommand{\Map}{\operatorname{map}}
\newcommand{\Hom}{\operatorname{Hom}}
\newcommand{\uHom}{\operatorname{\underline{Hom}}}
\newcommand{\Pic}{\operatorname{Pic}}
\newcommand{\Spec}{\operatorname{Spec}}
\newcommand{\Comp}{\operatorname{Comp}}
\newcommand{\Sm}{\operatorname{\mathbf{Sm}}}
\newcommand{\Shv}{\operatorname{\mathbf{Shv}}}
\newcommand{\pro}[1]{\text{\rm pro}_{#1}\text{\rm--}}
\newcommand{\tr}{{\operatorname{tr}}}
\newcommand{\dlog}{{\operatorname{dlog}}}
\newcommand{\fin}{{\operatorname{fin}}}
\newcommand{\Zar}{{\operatorname{Zar}}}
\newcommand{\Nis}{{\operatorname{Nis}}}
\newcommand{\et}{{\operatorname{\acute{e}t}}}
\newcommand{\id}{{\operatorname{Id}}}
\renewcommand{\lim}{\operatornamewithlimits{\varprojlim}}
\newcommand{\colim}{\operatornamewithlimits{\varinjlim}}
\newcommand{\ol}{\overline}
\renewcommand{\phi}{\varphi}
\renewcommand{\epsilon}{\varepsilon}
\newcommand{\MNS}{\operatorname{\mathbf{MNS}}}
\newcommand{\MNST}{\operatorname{\mathbf{MNST}}}
\newcommand{\MSm}{\operatorname{\mathbf{MSm}}}
\newcommand{\MPST}{\operatorname{\mathbf{MPST}}}
\newcommand{\CI}{\operatorname{\mathbf{CI}}}
\newcommand{\Sq}{{\operatorname{\mathbf{Sq}}}}
\newcommand{\MSmsq}{{(\MSm)^{\Sq}}}
\newcommand{\bcube}{{\ol{\square}}}
\newcommand{\cube}{\square}
\newcommand{\M}{\mathbf{M}}
\newcommand{\ulMSm}{\operatorname{\mathbf{\underline{M}Sm}}}
\newcommand{\ulMNS}{\operatorname{\mathbf{\underline{M}NS}}}
\newcommand{\ulMPS}{\operatorname{\mathbf{\underline{M}PS}}}
\newcommand{\ulMNST}{\operatorname{\mathbf{\underline{M}NST}}}
\newcommand{\ulMCor}{\operatorname{\mathbf{\underline{M}Cor}}}
\newcounter{spec}
{\end{list}}%
\newtheorem{lemma}{Lemma}[section]
\newtheorem{thm}[lemma]{Theorem}
\newtheorem{prop}[lemma]{Proposition}
\newtheorem{cor}[lemma]{Corollary}
\theoremstyle{definition}
\newtheorem{constr}[lemma]{Construction}
\theoremstyle{remark}
\newtheorem{remark}[lemma]{Remark}
\numberwithin{equation}{section}
\numberwithin{equation}{lemma}
\colorlet{LightRubineRed}{RubineRed!70!}
\def\lSm{\mathbf{lSm}}
\def\SmlSm{\mathbf{SmlSm}}
\def\Sm{\mathbf{Sm}}
\newcounter{elno}
\begin{document}

\def\aNis{a_{\Nis}}
\def\ulaNis{\underline{a}_{\Nis}}
\def\ulasNis{\underline{a}_{s,\Nis}}
\def\ulaNisfin{\underline{a}^{\fin}_{\Nis}}
\def\ulasNisfin{\underline{a}^{\fin}_{s,\Nis}}
\def\asNis{a_{s,\Nis}}
\def\ulasNis{\underline{a}_{s,\Nis}}
\def\qaq{\quad\text{ and }\quad}
\def\limcat#1{``\underset{#1}{\lim}"}
\def\Comp{\Comp^{\fin}}
\def\ulc{\ul{c}}
\def\ulb{\ul{b}}
\def\ulgam{\ul{\gamma}}
\def\MSm{\operatorname{\mathbf{MSm}}}
\def\MsigmaS{\operatorname{\mathbf{MsigmaS}}}
\def\ulMSm{\operatorname{\mathbf{\ul{M}Sm}}}
\def\ulMsigmaS{\operatorname{\mathbf{\ul{M}NS}}}

\def\ulMPS{\operatorname{\mathbf{\ul{M}PS}}}

\def\ulMsigmaS{\operatorname{\mathbf{\ul{M}PS}_\sigma}}
\def\ulMsigmaSTfin{\operatorname{\mathbf{\ul{M}PST}^{\fin}_\sigma}}
\def\ulMsigmaST{\operatorname{\mathbf{\ul{M}PST}_\sigma}}
\def\MsigmaS{\operatorname{\mathbf{MPS}_\sigma}}
\def\MsigmaST{\operatorname{\mathbf{MPST}_\sigma}}
\def\MsigmaSTfin{\operatorname{\mathbf{MPST}^{\fin}_\sigma}}

\def\ulMNS{\operatorname{\mathbf{\ul{M}NS}}}
\def\ulMNSTfin{\operatorname{\mathbf{\ul{M}NST}^{\fin}}}
\def\ulMNSfin{\operatorname{\mathbf{\ul{M}NS}^{\fin}}}
\def\ulMNST{\operatorname{\mathbf{\ul{M}NST}}}
\def\ulMEST{\operatorname{\mathbf{\ul{M}EST}}}
\def\MNS{\operatorname{\mathbf{MNS}}}
\def\MNST{\operatorname{\mathbf{MNST}}}
\def\MEST{\operatorname{\mathbf{MEST}}}
\def\MNSTfin{\operatorname{\mathbf{MNST}^{\fin}}}
\def\RSC{\operatorname{\mathbf{RSC}}}
\def\NST{\operatorname{\mathbf{NST}}}
\def\EST{\operatorname{\mathbf{EST}}}

\newcommand{\NS}{{\operatorname{\mathrm{NS}}}}

\def\LogRec{\operatorname{\mathbf{LogRec}}}
\def\Ch{\operatorname{\mathrm{Ch}}}

\def\MSmsq{\MSm^{\Sq}}
\def\Comp{\operatorname{\mathbf{Comp}}}
\def\uli{\ul{i}}
\def\ulis{\ul{i}_s}
\def\is{i_s}
\def\qfor{\text{ for }\;\;}
\def\CIlog{\operatorname{\mathbf{CI}}^{\mathrm{log}}}
\def\CIltr{\operatorname{\mathbf{CI}}^{\mathrm{ltr}}}
\def\CIt{\operatorname{\mathbf{CI}}^\tau}
\def\CItsp{\operatorname{\mathbf{CI}}^{\tau,sp}}
\def\ltr{\mathrm{ltr}}

\def\kX{\mathfrak{X}}
\def\kY{\mathfrak{Y}}
\def\kC{\mathfrak{C}}

\def\otCIsp{\otimes_{\CI}^{sp}}
\def\otCINissp{\otimes_{\CI}^{\Nis,sp}}

\def\hM#1{h_0^{\bcube}(#1)}
\def\hMNis#1{h_0^{\bcube}(#1)_{\Nis}}
\def\hMM#1{h^0_{\bcube}(#1)}
\def\hMw#1{h_0(#1)}
\def\hMwNis#1{h_0(#1)_{\Nis}}

\def\hetrec{h_{0, \et}^{\mathbf{rec}}}
\def\hetcube{h_{0, \et}^{\cube}}

\def\ihF#1{F^{#1}}
\def\ihFA{\ihF {\A^1}}

\def\istm{\iota_{st,m}}
\def\im{\iota_m}
\def\est{\epsilon_{st}}
\def\tL{\tilde{L}}
\def\tX{\tilde{X}}
\def\tY{\tilde{Y}}
\def\omegaCI{\omega^{\CI}}
\def\qwith{\;\text{ with} }
\def\aVNis{a^V_\Nis}
\def\ulMCorls{\ulMCor_{ls}}

\def\Zinf{Z_\infty}
\def\Einf{E_\infty}
\def\Xinf{X_\infty}
\def\Yinf{Y_\infty}
\def\Pinf{P_\infty}

\def\Lot{{\cubegm\otimes\cubegm}}

\def\Ln#1{\Lambda_n^{#1}}
\def\tLn#1{\widetilde{\Lambda_n^{#1}}}
\def\tild#1{\widetilde{#1}}
\def\otuCINis{\otimes_{\underline{\CI}_\Nis}}
\def\otCI{\otimes_{\CI}}
\def\otCINis{\otimes_{\CI}^{\Nis}}
\def\tF{\widetilde{F}}
\def\tG{\widetilde{G}}
\def\bcubered{\bcube^{\mathrm{red}}}
\def\cubegm{\bcube^{(1)}}
\def\cubegma{\bcube^{(a)}}
\def\cubegmb{\bcube^{(b)}}
\def\cubegmred{\bcube^{(1)}_{red}}
\def\cubegmreda{\bcube^{(a)}_{red}}
\def\cubegmredb{\bcube^{(b)}_{red}}

\def\LT{\bcube^{(1)}_{T}}
\def\LU{\bcube^{(1)}_{U}}
\def\LV{\bcube^{(1)}_{V}}
\def\LW{\bcube^{(1)}_{W}}
\def\LTred{\bcube^{(1)}_{T,red}}
\def\Lred{\bcube^{(1)}_{red}}
\def\LTred{\bcube^{(1)}_{T,red}}
\def\LUred{\bcube^{(1)}_{U,red}}
\def\LVred{\bcube^{(1)}_{V,red}}
\def\LWred{\bcube^{(1)}_{W,red}}
\def\PP{\P}
\def\AA{\A}

\def\LL{\bcube^{(2)}}
\def\LLred#1{\bcube^{(2)}_{#1,red}}
\def\LLredd{\bcube^{(2)}_{red}}
\def\Lredd#1{\bcube_{#1,red}}

\def\Lnredd#1{\bcube^{(#1)}_{red}}

\def\LLT{\bcube^{(2)}_T}
\def\LLTred{\bcube^{(2)}_{T,red}}

\def\LLU{\bcube^{(2)}_U}
\def\LLUred{\bcube^{(2)}_{U,red}}

\def\LLS{\bcube^{(2)}_S}
\def\LLSred{\bcube^{(2)}_{S,red}}
\def\tMCor{\Hom_{\MPST}}
\def\otHINis{\otimes_{\HI}^{\Nis}}

\def\Sh{\operatorname{\mathbf{Shv}}}
\def\Shv{\operatorname{\mathbf{Shv}}}
\def\PSh{\operatorname{\mathbf{PSh}}}
\def\Shltr{\operatorname{\mathbf{Shv}_{dNis}^{ltr}}}
\def\Shlog{\operatorname{\mathbf{Shv}_{dNis}^{log}}}
\def\Shvlog{\operatorname{\mathbf{Shv}^{log}}}
\def\SmlSm{\operatorname{\mathbf{SmlSm}}}
\def\lSm{\operatorname{\mathbf{lSm}}}
\def\lCor{\operatorname{\mathbf{lCor}}}
\def\SmlCor{\operatorname{\mathbf{SmlCor}}}
\def\PShltr{\operatorname{\mathbf{PSh}^{ltr}}}
\def\PShlog{\operatorname{\mathbf{PSh}^{log}}}
\def\lDM{\operatorname{\mathbf{logDM}^{eff}}}
\def\logDM{\operatorname{\mathbf{log}\mathcal{DM}^{eff}}}
\def\logDMlet{\operatorname{\mathbf{log}\mathcal{DM}^{eff}_{\mathrm{l\acute{e}t}}}}
\def\logDMone{\operatorname{\mathbf{log}\mathcal{DM}^{eff}_{\leq 1}}}
\def\logCI{\mathbf{logCI}^{\ltr}} 

\def\DM{\operatorname{\mathbf{DM}^{eff}}}
\def\DMinf{\operatorname{\mathcal{DM}^{eff}}}
\def\lDA{\operatorname{\mathbf{logDA}^{eff}}}
\def\logDA{\operatorname{\mathbf{log}\mathcal{DA}^{eff}}}
\def\DA{\operatorname{\mathbf{DA}^{eff}}}
\def\Log{\operatorname{\mathcal{L}\textit{og}}}
\def\Rsc{\operatorname{\mathcal{R}\textit{sc}}}
\def\Pro{\mathrm{Pro}\textrm{-}}
\def\pro{\mathrm{pro}\textrm{-}}
\def\dg{\mathrm{dg}}
\def\plim{\mathrm{``lim"}}
\def\ker{\mathrm{ker}}
\def\coker{\mathrm{coker}}

\def\Alb{\operatorname{Alb}}
\def\bAlb{\mathbf{Alb}}
\def\Gal{\operatorname{Gal}}

\def\hofib{\mathrm{hofib}}
\def\triv{\mathrm{triv}}
\def\ABl{\mathcal{A}\textit{Bl}}
\def\divsm#1{{#1_\mathrm{div}^{\mathrm{Sm}}}}

\def\cA{\mathcal{A}}
\def\cB{\mathcal{B}}
\def\cC{\mathcal{C}}
\def\cD{\mathcal{D}}
\def\cE{\mathcal{E}}
\def\cI{\mathcal{I}}
\def\cS{\mathcal{S}}
\def\cM{\mathcal{M}}
\def\cO{\mathcal{O}}
\def\cP{\mathcal{P}}

\def\XP{X \backslash \sP}
\def\M0a{{}^t\cM_0^a}
\newcommand{\Ind}{{\operatorname{Ind}}}

\def\Xkbar{\overline{X}_{\overline{k}}}
\def\dx{{\rm d}x}

\newcommand{\dNis}{{\operatorname{dNis}}}
\newcommand{\loget}{{\operatorname{l\acute{e}t}}}
\newcommand{\ABNis}{{\operatorname{AB-Nis}}}
\newcommand{\sNis}{{\operatorname{sNis}}}
\newcommand{\sZar}{{\operatorname{sZar}}}
\newcommand{\set}{{\operatorname{s\acute{e}t}}}
\newcommand{\cofib}{\mathrm{Cofib}}

\newcommand{\Gmlog}{\G_m^{\log}}
\newcommand{\Gmlogred}{\overline{\G_m^{\log}}}

\newcommand{\varcolim}{\mathop{\mathrm{colim}}}
\newcommand{\varlim}{\mathop{\mathrm{lim}}}
\newcommand{\tensor}{\otimes}

\newcommand{\eq}[2]{\begin{equation}\label{#1}#2 \end{equation}}
\newcommand{\eqalign}[2]{\begin{equation}\label{#1}\begin{aligned}#2 \end{aligned}\end{equation}}

\def\varplim#1{\text{``}\varlim_{#1}\text{''}}
\def\det{\mathrm{d\acute{e}t}}

	\address{Institut für Mathematik, Universität Heidelberg, MATHEMATIKON INF 205, 69120  Heidelberg}
	\email{merici@mathi.uni-heidelberg.de}
	
	\thanks{This project was supported by the RCN project 313472 \emph{EMOHO - Equations in Motivic Homotopy}, the MSCA-PF project 101103309 \emph{MIPAC - Motivic Integral p-adic Cohomologies} and the Deutsche Forschungsgemeinschaft (DFG, German Research Foundation)
TRR 326 \textit{Geometry and Arithmetic of Uniformized Structures}, project number 444845124.}

	\title{Some computations in the heart of the homotopy t-structure on logarithmic motives}
	\author{Alberto Merici}
    \begin{abstract}
    In this note we will illustrate a method for computing the $\pi_0$ of the effective log motive of a smooth and proper variety over a perfect field $k$ and show that it is $\A^1$-invariant. We will apply this to compute the first homotopy groups of $\P^1$ to show that the stripping functor from log motivic sheaves to (usual) Nisnevich sheaves with transfers is fully faithful.      
	\end{abstract}
    
	\maketitle
\section{Introduction}

Since their introduction in \cite{BPO}, log motives have proved to be very useful to compute non-$\A^1$-invariant cohomologies using methods from motivic homotopy theory. The main motivic technique is the log version of Morel--Voevodsky purity of \cite[Theorem 3.2.21]{BPO-SH}, which we now recall. Let $S\in \mathbf{Sch}$ and $X=(\ul{X},\partial X)$ a log scheme log smooth over $S$ with $\ul{X}$ smooth over $S$ and $\partial X$ supported on a simple normal crossing divisor $D_1+\ldots D_n$. Let $Z$
be a smooth closed subscheme of $\ul{X}$ having strict normal crossing with $D_1 +\ldots D_n$. Then there is a cofiber sequence functorial in $Z\subseteq X$\[
M^{\rm eff}(Bl_Z(X),E) \to M^{\rm eff}(X) \to \mathrm{Th}(N_Z X),
\]
where $\mathrm{Th}(N_Z X)$ is the Thom space of the normal bundle of $Z\subseteq \ul{X}$, with an appropriate log structure. 

If $X=(\ul{X},\triv)$ is affine and with trivial log structure and we choose $Z=D_1$ a smooth divisor, the sequence above gives\[
M^{\rm eff}(\ul{X},D_1)\to M^{\rm eff}(\ul{X},\triv)\xrightarrow{\mathrm{Gys}} \dfrac{M^{\rm eff}(Z\times \P^1,\triv)}{M^{\rm eff}(Z,\triv)},
\]
and the terms in the middle and on the right have trivial log structure: this raises the question on whether one can compute recursively $M^{\rm eff}(X)$ using only data coming from classical (non-logarithmic) schemes. The obstacle to this is that the map $\mathrm{Gys}$ above is induced by a zig-zag of maps of log schemes with non-trivial log structures (as in \cite[Theorem 3.2.21]{BPO-SH}). 

Let $k$ be a perfect field. In \cite{BindaMerici} it was shown that the categories of effective logarithmic motives over a field $\logDM(k,\Z)$ and $\logDA(k,\Z)$ admit a homotopy $t$-structure, whose hearts are the category of strictly $\bcube$-invariant Nisnevich sheaves with and without transfers, respectively, i.e. sheaves such that the projection induces an equivalence $H^q_{\dNis}(X,F)\cong H^q_{\dNis}(X\times\bcube,F)$ fo all $q\geq 0$. The lifting problem above in this situation is more treatable, indeed if $\omega_\sharp$ denotes the functor that sends a sheaf $F$ on log schemes to the sheaf on (usual) schemes\[
X\in \Sm_k \mapsto F(X,\triv),
\] 
then this restriction to the hearts of the $t$-structure is faithful and exact, and in \cite[Theorem 3.26]{mericicrys} the precise obstruction to its fullness has been computed (see Theorem \ref{thm:Gysin} for the explicit condition).

In this note we show that in the heart of the homotopy $t$-structure $\logCI(k,\Z)=\logDM(k,\Z)^{\heartsuit}$, the obstruction is an empty condition (see Theorem \ref{thm:almost-conj}):
\begin{thm}\label{thm:main-intro}
		Let $F,G\in \logCI(k,\Z)$, and let $\phi\colon \omega_\sharp F\to \omega_\sharp G$ be a map in $\Sh_{\Nis}^{\tr}(k,\Z)$. Then $\phi$ lifts uniquely to a map $F\to G$ in $\logCI(k,\Z)$. In particular the functor $
        \logCI(k,\Z)\xrightarrow{\omega_\sharp\iota} \Sh_{\Nis}^{\tr}(k,\Z)$
        is fully faithful and exact.
\end{thm}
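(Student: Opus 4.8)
The plan is to reduce the full-faithfulness statement to the vanishing of the obstruction appearing in \cite[Theorem 3.26]{mericicrys}, recalled here as Theorem \ref{thm:Gysin}. Since $\omega_\sharp\iota$ is already known to be faithful and exact on the hearts, the only thing to prove is fullness, i.e.\ that every $\phi\colon \omega_\sharp F\to\omega_\sharp G$ in $\Sh_{\Nis}^{\tr}(k,\Z)$ is in the image, and that the lift is unique (uniqueness being immediate from faithfulness). By Theorem \ref{thm:Gysin}, a given $\phi$ lifts if and only if it is compatible with the Gysin maps coming from the purity sequence; concretely, for a smooth affine $\ul X$ with a smooth divisor $D_1$, the map $\mathrm{Gys}$ realizes $M^{\rm eff}(\ul X,D_1)$ as a subobject and the obstruction measures whether $\phi$ respects the induced filtration. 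So the real content is: \emph{for $F,G\in\logCI(k,\Z)$, every morphism of the associated $\A^1$-invariant Nisnevich sheaves with transfers automatically commutes with these Gysin maps.}

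First I would use the main computational input of the note: the $\pi_0$ of $M^{\rm eff}(\ul X,D)$ for $\ul X$ smooth proper, together with the fact that $\pi_0 M^{\rm eff}(\P^1,\triv)$ (equivalently the relevant homotopy groups of $\P^1$ computed in the paper) is $\A^1$-invariant. The key point is that the Thom space term $\mathrm{Th}(N_ZX)$ and the term $M^{\rm eff}(Z\times\P^1,\triv)/M^{\rm eff}(Z,\triv)$ have trivial log structure, so after applying $\omega_\sharp$ and passing to $\pi_0$ they become objects of $\Sh_{\Nis}^{\tr}(k,\Z)$ that are $\A^1$-invariant; thus on $\pi_0$ the Gysin triangle becomes an exact sequence entirely in the $\A^1$-local world, where it coincides (by the classical Morel--Voevodsky purity / Gysin sequence for $\DM^{\rm eff}$) with the Gysin sequence already present for the source sheaf $\omega_\sharp F$. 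Concretely I would argue that the zig-zag of log schemes defining $\mathrm{Gys}$, after applying $\omega_\sharp$ and $h_0$, collapses to the classical Gysin map because each intermediate log scheme, when stripped, contributes an $\A^1$-contractible correction by the $\A^1$-invariance established above.

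Next I would make this precise using the adjunction $(\omega_\sharp,\omega^*)$ or equivalently $(\omega_!,\omega^*)$ between sheaves on $\lSm$ and on $\Sm$ at the level of hearts: a morphism $\phi\colon\omega_\sharp F\to\omega_\sharp G$ corresponds by adjunction to $F\to\omega^*\omega_\sharp G$, and I need to lift the target back to $G$. The image of $G\to\omega^*\omega_\sharp G$ is controlled exactly by the Gysin data, and the $\A^1$-invariance of $\pi_0 M^{\rm eff}(\P^1)$ forces the unit $G\to\omega^*\omega_\sharp G$ to be an isomorphism on the subcategory of log smooth schemes whose log structure is built from smooth divisors — which by resolution of singularities and the dividing Nisnevich topology generate everything. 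Hence $\omega^*\omega_\sharp$ is (isomorphic to) the identity on $\logCI(k,\Z)$, giving fullness and, combined with faithfulness, full-faithfulness; exactness is inherited from exactness of $\omega_\sharp\iota$ which is part of the cited input.

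The main obstacle I expect is the second step: showing that the stripped Gysin zig-zag really collapses to the classical one, i.e.\ that the non-trivial log structures on the intermediate schemes in \cite[Theorem 3.2.21]{BPO-SH} do not obstruct anything after applying $\omega_\sharp$ and $h_0$. This is precisely where the $\A^1$-invariance of $\pi_0 M^{\rm eff}(\P^1,\triv)$ — the technical heart of the note — has to be leveraged, and it likely requires a careful induction on the number of components of the boundary divisor, reducing each step to the single-smooth-divisor case via the Mayer--Vietoris/blow-up triangles and the purity sequence, checking at each stage that the correction terms are $\A^1$-invariant with trivial log structure so that they are already ``seen'' by $\omega_\sharp F$. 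Once this bookkeeping is done, the comparison with the classical Gysin sequence in $\DM^{\rm eff}(k,\Z)$ is formal.
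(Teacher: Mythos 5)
Your first reduction is the right one and matches the paper: uniqueness and exactness follow from the known faithfulness/exactness of $\omega_\sharp\iota$, and by Theorem \ref{thm:Gysin} fullness reduces to showing that every $\phi\colon \omega_\sharp F\to\omega_\sharp G$ automatically commutes with the maps $\mathrm{Gys}_0$. But from there the argument has a genuine gap, and the final step is false. You assert that the $\A^1$-invariance of $\pi_0 M^{\rm eff}(\P^1)$ forces the unit $G\to\omega^*\omega_\sharp G$ to be an isomorphism, hence that $\omega^*\omega_\sharp\cong\id$ on $\logCI(k,\Z)$. This cannot be: the unit $\iota F\hookrightarrow\omega^*\omega_\sharp\iota F$ is an isomorphism if and only if $\omega_\sharp\iota F$ is strictly $\A^1$-invariant, and the whole point of $\logCI(k,\Z)$ is that this fails for the interesting objects (e.g.\ sheaves of differential forms); your conclusion would collapse $\logCI(k,\Z)$ onto $\HI(k,\Z)$. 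Relatedly, $\omega_\sharp F$ is \emph{not} ``the associated $\A^1$-invariant sheaf'': the $\A^1$-invariance established in the note is only for $\omega_\sharp h_0^{\bcube}(\ul X,D)$ with $\ul X$ \emph{proper} (Proposition \ref{prop:h0-X}), not for arbitrary objects of the heart. You also invoke resolution of singularities, which the paper deliberately avoids.

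The missing idea is how the ``collapse of the Gysin zig-zag'' is actually implemented. In the paper it is not an $\A^1$-contractibility argument but a concrete identification: one computes $h_1^{\bcube}(\P^1,\triv)\cong\omega^*\G_m$ (Proposition \ref{prop:h0-gm}, via the criterion of Proposition \ref{prop;star} applied to proper $\ul X$), uses this plus a descent spectral sequence to prove $\uHom_{\Sh_{\Nis}^{\tr}}(\G_m,\omega_\sharp F)\cong a_{\Nis}H^1(\P^1\times-,\omega_\sharp F)\cong\omega_\sharp\uHom_{\Sh_{\dNis}^{\ltr}}(\omega^*\G_m,F)$ (Lemma \ref{lemma:trick}), and then shows (Lemma \ref{lem:gysin-is-downstairs}) that under this identification $\mathrm{Gys}_0^F$ equals the composite $\Hom(\G_m\otimes\Z_{\tr}(K),\omega_\sharp F)\to\Map(\Z_{\tr}(\P^1_K)[-1],\omega_\sharp F)\to\Map(\Z_{\tr}(U)[-1],\omega_\sharp F)$ built from the tautological-bundle map $\tau$ and the open immersion $\ul j$ — maps that live entirely in $\cD(\Sh_{\Nis}^{\tr}(k,\Z))$. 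Functoriality in $\phi$ is then immediate. Without this identification (or an equivalent one) your appeal to Theorem \ref{thm:Gysin} does not go through, since compatibility with $\mathrm{Gys}_0$ is exactly the nontrivial obstruction you still need to kill.
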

This result was first announced in in \cite[Section 7]{BindaMerici} assuming resolutions of singularities, but the proof contained a gap pointed out in \cite{BindaMericierratum}: then this was formulated as a conjecture in \cite[Conjecture 0.2]{BindaMericierratum}. 

The main input of this is a method to compute $h_0^{\bcube}(\ul{X},D)$ with $\ul{X}$ smooth and proper over $k$, and we deduce that it is $\A^1$-invariant, without any assumption on resolution of singularities (with this assumption, this was proved in a more general form in \cite[Proposition 8.2.8]{BPO}). We use this to compute $\pi_1 M^{\rm eff}(\P^1)$, which will be crucial to show that in the case with transfers the obstruction is an empty condition, and so deduce \cite[Conjecture 0.2]{BindaMericierratum} from \cite[Theorem 3.26]{mericicrys}. This corrects the gap in \cite[Section 7]{BindaMerici}.

In the $\P^1$-stable situation, we expect this result to be useful in the comparison of log motivic spectra with the $\P^1$-spectra of Annala--Hoyois--Iwasa \cite{AHI}: in this context the Gysin maps are contained in a work in progress by Longke Tang (see \cite{AHI-Atiyah}). 

\subsection*{Acknowledgements}
	This article first appeared as an appendix to \cite{mericicrys}, but due to its length and independent interest we decided to separate the two parts. The author would like to thank the anonymous referees of \cite{mericicrys} and of this paper for their meticulous analysis, providing helpful comments which filled some gaps in the arguments and led to an improved presentation. The main results were obtained while the author was a postdoc supported by the RCN project \emph{EMOHO} at the University of Oslo, and the final version was settled with the support of the MSCA-PF \emph{MIPAC} carried out at the University of Milan, and the CRC/TRR \emph{GAUS}, at Universit\"at Heidelberg. The author is very thankful for the hospitality and the great work environment.
\subsection*{Notation}
For $F\to G$ a map in a stable $\infty$-category, we will denote by $G/F$ or $\dfrac{G}{F}$ its homotopy cofiber. For $\cC$ a stable $\infty$-category equipped with a $t$-structure, we will always consider implicit the embedding $\cC^{\heartsuit}\subseteq \cC$: in order to improve readability we will avoid writing $F[0]$ for an object concentrated in degree $0$ and simply write $F$ if this does not rise confusion, and only write $F[i]$ for $i\neq 0$. In particular, if $F\hookrightarrow G$ is a monomorphism in $\cC^{\heartsuit}$, we will use the notation $G/F$ or $\dfrac{G}{F}$ to indicate the cokernel in $\cC^{\heartsuit}$, which is also equivalent to the cofiber.
\section{Recollections on log motives}\label{ssec:logDM} 
	We recall the construction of logarithmic motives over a perfect field of \cite{BPO} and some properties. The standard reference for log schemes is \cite{ogu}. For a log scheme $X$, we denote by $\ul{X}$ the underlying scheme of $X$, by $\partial X$ the log structure and by $|\partial X|$ its support, seen as a reduced closed subscheme of $X$.
    
	We denote by ${\SmlSm}(k)$ the category of fs log smooth log schemes over the log scheme $(\Spec(k),\triv)$ such that $\ul{X}$ is smooth over $k$. By e.g. \cite[Lemma A.5.10]{BPO}, then in this case $\partial X$ is supported on a strict normal crossing divisor on $\ul{X}$ and  $\partial X$ is isomorphic to the compactifying log structure associated to the open embedding $\ul{X}-|\partial X| \hookrightarrow \ul{X}$. If $D$ is a strict normal crossing divisor on $\ul{X}$, we will often write $(\ul{X},D)\in \SmlSm(k)$ meaning the log scheme with compactifying log structure associated to $\ul{X}-|D|\hookrightarrow \ul{X}$.

    Following \cite{BPO}, we denote by $\lCor(k)$ the category of finite log correspondences over $k$. It is a variant of the Suslin--Voevodsky category of finite correspondences $\Cor(k)$. It has the same objects as $\SmlSm(k)$\footnote{Notice that this notation conflicts with the notation of \cite{BPO} where the objects were the same as $\lSm(k)$, although the categories of sheaves are the same in light of \cite[Lemma 4.7.2]{BPO}}, and morphisms are given by the free abelian group generated by elementary correspondences  $V^o\subset (X- \partial X) \times (Y- \partial Y)$ such that the closure $V\subset \ul{X}\times \ul{Y}$ is finite and surjective over (a component of) $\ul{X}$ and such that there exists a morphism of log schemes $V^N \to Y$, where $V^N$ is the fs log scheme whose underlying scheme is the normalization of $V$ and whose log structure is given by the inverse image log structure along the composition $\underline{V^N} \to \ul{X}\times \ul{Y} \to \ul{X}$. See \cite[2.1]{BPO} for more details, and for the proof that this definition gives indeed a category. 

    This category is equipped with a Grothendieck topology, called the \emph{dividing Nisnevich topology}, generated as a cd-topology by the following: 
    \begin{itemize}
    \item \emph{strict Nisnevich squares}, i.e. pullbacks $
    \begin{tikzcd}
    U_1\times_X U_2\ar[r]\ar[d]&U_1\ar[d]\\
    U_2\ar[r]&X
    \end{tikzcd}$
    such that $\{\ul{U_i}\to \ul{X}\}$ is a Nisnevich square and $\partial U_i = \partial X_{|U_i}$ 
    \item \emph{dividing squares}, i.e. pullbacks $
    \begin{tikzcd}
    \emptyset\ar[r]\ar[d]&X\ar[d]\\
    \emptyset\ar[r]&Y
    \end{tikzcd}$
    where $f$ is a surjective proper log \'etale monomorphism.
    \end{itemize}
	See \cite[Remark 2.2.4]{BPO-SH} for more details on this. 
    
	We denote by $\PShltr(k, \Z)$ the category of abelian presheaves on $\lCor(k)$: such presheaves are called \emph{presheaves with log transfers}. We let $\mathbf{Shv}_{\dNis}^{\rm ltr}(k, \Z)\subseteq \PShltr(k, \Z)$ the category of dividing Nisnevich sheaves with log transfers, and for $U\in \SmlSm(k)$ we let $\Z_{\ltr}(U)$ be the representable sheaf. Analogously to the case of Voevodsky's finite correspondences, it has the structure of a Grothendieck abelian category \cite[Theorem 4.5.7 and \S 4.2]{BPO}, so we let $\cD(\mathbf{Shv}_{\dNis}^{\rm ltr}(k, \Z))$, be its derived category.

    Finally (see \cite[\S 5.2]{BPO}), let $\bcube:=(\P^1,\infty)$. We denote $\mathbf{log}\mathcal{DM}^{\textrm{eff}}(k,\Z)$ the localization of $\cD(\mathbf{Shv}_{\dNis}^{\rm ltr}(k, \Z))$) with respect to the class of maps
		\[ \Z_{\ltr}(\bcube\times X)[n]\to \Z_{\ltr}(X)[n]
		\]
	for all $X\in \SmlSm(k)$ and $n\in \Z$. 
	We let $L_{(\dNis,\bcube)}$
	be the localization functor and for $X\in \SmlSm(k)$, we will let $M^{\rm eff}(X) = L_{(\dNis,\bcube)}(\Z_{\ltr}(X))$.

	We recall the following result \cite[Theorem 5.7]{BindaMerici}:
	\begin{thm}\label{thm:t-structure}
		The standard $t$-structure of $\cD(\Shv^{\rm ltr}_{\dNis}(k, \Z))$ induces an accessible $t$-structure on $\logDM(k,\Z)$ compatible with filtered colimits in the sense of \cite[Definition 1.3.5.20]{HA}, called the \emph{homotopy $t$-structure}.
	\end{thm}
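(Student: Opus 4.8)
The plan is to transport the standard $t$-structure of $\D:=\cD(\Shv^{\rm ltr}_{\dNis}(k,\Z))$ along the Bousfield localization $L_{(\dNis,\bcube)}$, the only non-formal ingredient being a strict $\bcube$-invariance statement playing, in the logarithmic world, the role of Voevodsky's homotopy invariance theorem. Write $(\D^{\le 0},\D^{\ge 0})$ for the standard $t$-structure on $\D$, so that $\D^{\le 0}$ (resp.\ $\D^{\ge 0}$) consists of the complexes whose cohomology sheaves $\ul{H}^i$ vanish for $i>0$ (resp.\ $i<0$); since $\Shv^{\rm ltr}_{\dNis}(k,\Z)$ is a Grothendieck abelian category, filtered colimits in it are exact, so $\D^{\le 0}$ is closed under all colimits in $\D$ and is presentable, and the truncation functors commute with filtered colimits, whence this $t$-structure is accessible and compatible with filtered colimits. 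Next I would identify $\logDM(k,\Z)$ with the reflective full subcategory of $\D$ spanned by the $\bcube$-local objects, with reflector $L_{(\dNis,\bcube)}$. Since the localizing class is a \emph{set} of maps, an object $N$ is $\bcube$-local exactly when $\Hom_{\D}(C[m],N)=0$ for all $m\in\Z$ and all $C$ in the set $\{\Cone(\Z_{\ltr}(\bcube\times X)\to\Z_{\ltr}(X)):X\in\SmlSm(k)\}$; being the right orthogonal complement of a set of objects in a stable category, the $\bcube$-local subcategory is thus closed in $\D$ under shifts, under all limits, and under cofibers.

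First I would propose the candidate $t$-structure on $\logDM(k,\Z)$ defined by intersection, $\logDM(k,\Z)^{\le 0}:=\logDM(k,\Z)\cap\D^{\le 0}$ and $\logDM(k,\Z)^{\ge 0}:=\logDM(k,\Z)\cap\D^{\ge 0}$. The orthogonality axiom is immediate, since $\logDM(k,\Z)$ is full in $\D$ and $\Hom_{\D}(\D^{\le-1},\D^{\ge 0})=0$. For the truncation triangles, given $M\in\logDM(k,\Z)$ one takes the standard triangle $\tau^{\le 0}M\to M\to\tau^{\ge 1}M$ formed in $\D$; the whole matter reduces to checking that $\tau^{\le 0}M$ is again $\bcube$-local. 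For then $\tau^{\ge 1}M$, being a cofiber of a map between $\bcube$-local objects, is $\bcube$-local as well, the triangle lives in $\logDM(k,\Z)$ with outer terms in $\logDM(k,\Z)^{\le 0}$ and $\logDM(k,\Z)^{\ge 1}$, and it is therefore the required truncation triangle; and since the $\bcube$-local subcategory is closed under shifts, one obtains closure under every $\tau^{\le n}$ and $\tau^{\ge n}$ at once.

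The one genuinely hard point, and the main obstacle, is the strict $\bcube$-invariance statement: for every $\bcube$-local complex $M$ and every $i$, the cohomology sheaf $\ul{H}^i(M)$ lies in $\logCI(k,\Z)$. This is the logarithmic analogue of Voevodsky's theorem that the cohomology sheaves of an $\A^1$-local complex with transfers over a perfect field are strictly homotopy invariant, and I would invoke it from the cube-invariance machinery of \cite{BPO}. Granting it, the reduction above closes formally: a sheaf $F\in\logCI(k,\Z)$ is $\bcube$-local, because strict $\bcube$-invariance — the isomorphisms $H^q_{\dNis}(X,F)\cong H^q_{\dNis}(X\times\bcube,F)$ for all $q$ and all $X$ — says precisely that pullback induces isomorphisms $\Hom_{\D}(\Z_{\ltr}(X),F[q])\to\Hom_{\D}(\Z_{\ltr}(\bcube\times X),F[q])$ for all $q$, i.e.\ $\Hom_{\D}(\Cone(\Z_{\ltr}(\bcube\times X)\to\Z_{\ltr}(X)),F[q])=0$ for all $q$, and then $F[m]$ is $\bcube$-local for every $m$ by closure under shifts. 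Consequently, for $M$ $\bcube$-local and $n\ge 0$, the bounded truncation $\tau^{\ge-n}\tau^{\le 0}M$ is built by finitely many extensions out of the objects $\ul{H}^i(M)[-i]$ with $-n\le i\le 0$, each $\bcube$-local by the preceding remark, hence is itself $\bcube$-local (closure under cofibers and extensions), and therefore $\tau^{\le 0}M=\holim_n\tau^{\ge-n}\tau^{\le 0}M$ is $\bcube$-local (closure under limits). The heart of the resulting $t$-structure is then $\logDM(k,\Z)\cap\D^{\heartsuit}$, which is the category of $\bcube$-local objects of $\Shv^{\rm ltr}_{\dNis}(k,\Z)$, that is, $\logCI(k,\Z)$ by the very definition of strict $\bcube$-invariance.

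Finally, accessibility and compatibility with filtered colimits are inherited from $\D$: each cone $\Cone(\Z_{\ltr}(\bcube\times X)\to\Z_{\ltr}(X))$ is a finite colimit of representable sheaves $\Z_{\ltr}(Y)$, which are compact objects of $\D$ by the description of the dividing Nisnevich topology in \cite{BPO}, so the $\bcube$-local subcategory $\logDM(k,\Z)\subseteq\D$ is closed under filtered colimits; together with the fact that $\tau^{\le 0}$ and $\tau^{\ge 0}$ computed in $\D$ commute with filtered colimits and preserve $\bcube$-locality, this shows that the truncation functors induced on $\logDM(k,\Z)$ commute with filtered colimits and that $\logDM(k,\Z)^{\le 0}$ is presentable, which is exactly accessibility and compatibility with filtered colimits in the sense of \cite[Definition 1.3.5.20]{HA}. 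All the weight of the argument lies in the strict $\bcube$-invariance of the cohomology sheaves of $\bcube$-local complexes; the remaining steps are formal manipulations with the reflective subcategory $\logDM(k,\Z)\subseteq\D$.
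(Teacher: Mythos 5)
The paper does not actually prove this statement: it is recalled verbatim from \cite[Theorem 5.7]{BindaMerici}, so the only comparison available is with the proof given there. Your formal reduction is fine --- defining both halves of the $t$-structure by intersecting with the standard one, noting that orthogonality is inherited, and reducing everything to the claim that the standard truncation $\tau^{\le 0}M$ of a $\bcube$-local complex is again $\bcube$-local. The problem is the ``one genuinely hard point'' you isolate, namely that every cohomology sheaf $\ul{H}^i(M)$ of a $\bcube$-local complex is strictly $\bcube$-invariant: this is not available in \cite{BPO}, and it is in fact \emph{equivalent} to the theorem you are proving. Indeed, if the standard $t$-structure restricts to $\logDM(k,\Z)$ with both halves given by intersection, then the truncation triangle of a local $M$ in the induced $t$-structure is also a valid truncation triangle in $\cD(\Shv^{\rm ltr}_{\dNis}(k,\Z))$, hence coincides with the standard one by uniqueness of truncations; so $\tau^{\le 0}M$ is local and each $\ul{H}^i(M)$ lies in $\logCI(k,\Z)$. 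Conversely, your argument derives the theorem from this statement. The proposal therefore reduces the theorem to an equivalent assertion and cites it to a reference that does not contain it. The actual content of \cite[Theorem 5.7]{BindaMerici} is a logarithmic analogue of Morel's connectivity theorem: one proves that $L_{(\dNis,\bcube)}$ is right $t$-exact (it preserves the non-positive part of the standard $t$-structure), which rests on the purity theorem (Theorem \ref{thm:purity} here); the $t$-structure is then produced from the subcategory generated under colimits and extensions by the $M^{\rm eff}(X)[n]$ for $n\ge 0$, via the general machinery of \cite{HA}, and accessibility and compatibility with filtered colimits come for free from that construction. Your route and the connectivity route are interchangeable only after one of the two hard statements has been established; neither is formal.

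Two smaller points. First, your passage from bounded truncations to $\tau^{\le 0}M=\holim_n\tau^{\ge -n}\tau^{\le 0}M$ uses left completeness of the standard $t$-structure on $\cD(\Shv^{\rm ltr}_{\dNis}(k,\Z))$, which is not automatic for the derived category of a Grothendieck abelian category; you should either justify it (for instance by testing locality against each cone $\Cone(\Z_{\ltr}(\bcube\times X)\to\Z_{\ltr}(X))$ separately and using that each individual $X$ has finite Nisnevich cohomological dimension) or restructure the argument to avoid it. Second, the compactness of the representables $\Z_{\ltr}(Y)$ in the dividing Nisnevich derived category is a nontrivial input --- one must control the filtered colimit over dividing covers --- and deserves a precise reference rather than an appeal to ``the description of the topology''.
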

	
	We denote by $\logCI(k,\Z)$ its heart, which is then identified with the full subcategory of $\Shv^{\rm ltr}_{\dNis}(k, \Z)$ of sheaves $F$ such that for all $X\in \SmlSm$, the projection induces an isomorphism $R\Gamma(X,F)\simeq R\Gamma(X\times \bcube,F)$ and it is a Grothendieck abelian category. The inclusion
	\[
	\iota \colon \logCI(k,\Z)\hookrightarrow \Shv^{\rm ltr}_{\dNis}(k, \Z)
	\]
	admits a left adjoint $h_0^{\bcube}$, given by the formula $F\mapsto \pi_0(L_{(\dNis,\bcube)}(F[0]))$, and a right adjoint $h^0_{\bcube}$  (see \cite[Proposition 5.8]{BindaMerici}), in particular $\iota$ preserves all limits and colimits. In general, we let $h_i^{\bcube}(F):= \pi_i(L_{(\dNis,\bcube)}(F[0]))\in \logCI(k,\Z)$.
    
    We have the following purity result \cite[Theorem 5.10]{BindaMerici}:
    \begin{thm}\label{thm:purity}
        Let $X\in \SmlSm(k)$ and let $\ul{U}\subseteq \ul{X}$, $\partial U := \partial X_{\ul{U}}$ and $U:=(\ul{U},\partial U)$. Then for all $F\in \logCI(k,\Z)$ the restriction $F(X)\to F(U)$ is injective.
    \end{thm}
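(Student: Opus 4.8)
The plan is to reduce the statement to a local computation on $\underline{X}$ and then exploit the strict normal crossing structure together with $\bcube$-invariance. First I would observe that since $F$ is a dividing Nisnevich sheaf and injectivity can be checked stalkwise, it suffices to treat the case where $\underline{X}$ is local (henselian), or even to argue with the generic points of $\underline{U}$; in fact, because $F$ lies in $\logCI(k,\Z)$, such sheaves enjoy good injectivity properties along dominant maps, so the key point is to compare $F(X)$ with $F$ evaluated at a dense open of $X$. Concretely, I would factor the inclusion $\underline{U} \hookrightarrow \underline{X}$ into an open immersion followed by the identity on the boundary, reducing to two cases: (a) $\underline{U} = \underline{X}$ with $\partial U \subseteq \partial X$ a sub-divisor, and (b) $\underline{U}$ an open subscheme with $\partial U = \partial X|_{\underline U}$.

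For case (b), the essential tool is the semi-purity/injectivity already built into the theory of $\bcube$-invariant sheaves with log transfers: restriction to a dense open is injective because $F$, being in the heart, behaves like a (log) cycle-theoretic sheaf with no sections supported in positive codimension — this is the analogue of the classical fact that a homotopy invariant Nisnevich sheaf with transfers injects into its generic stalk, and it should follow from the contraction/Gysin formalism recalled in the introduction (the purity cofiber sequence expresses the "difference" between $X$ and an open complement in terms of a Thom space, which is connective, hence $\pi_0$ only receives — never kills — sections). For case (a), shrinking the boundary from $\partial X$ to $\partial U$ corresponds to a blow-up/admissibility comparison: the map $(\underline X, \partial U) \to (\underline X, \partial X)$ factors through dividing covers and strict Nisnevich data, and on sections of a dividing Nisnevich sheaf in the heart one checks the map is injective by passing to the strict normal crossing strata and using that each stratum's contribution is controlled by a $\bcube$-invariant sheaf (no torsion phenomena in codimension $\geq 1$).

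The main obstacle I anticipate is case (a): shrinking the log structure is genuinely subtler than shrinking the underlying scheme, because the log transfers structure and the dividing topology interact non-trivially with the boundary, and one cannot simply cite the classical Voevodsky injectivity result. I would handle this by an induction on the number of components of $\partial X \setminus \partial U$, using the purity cofiber sequence $M^{\rm eff}(Bl_Z(X),E)\to M^{\rm eff}(X)\to \mathrm{Th}(N_Z X)$ with $Z$ a component of the boundary divisor to be removed: applying $\pi_0 = h_0^{\bcube}$ and using that $\mathrm{Th}(N_Z X)$ is connective (so contributes nothing to $\pi_{-1}$ of the relevant complex after a shift), one gets a left-exact sequence exhibiting $F(X) \hookrightarrow F$(smaller boundary). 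Finally, the open-subscheme case and the boundary-shrinking case compose to give the general inclusion $U = (\underline U, \partial X|_{\underline U}) \hookrightarrow X$, completing the proof. I expect all remaining verifications — compatibility of restriction maps, reduction to henselian local rings, handling of non-connected $\underline X$ — to be routine.
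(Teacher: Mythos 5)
First, a point of comparison: this theorem is not proved in the paper at all --- it is recalled verbatim from \cite[Theorem 5.10]{BindaMerici} --- so there is no in-paper argument to measure your sketch against. The cited proof is of a quite different nature from what you propose: it is an adaptation of the injectivity arguments for invariant sheaves with transfers, built on a Gabber-type presentation lemma (the one recalled here as \eqref{eq:Gabber}) together with $\bcube$-invariance, not on the Thom/Gysin cofiber sequence. Note also that your case (a) is not part of the statement: the theorem fixes $\partial U=\partial X|_{\ul U}$, so only your case (b) --- restriction to a dense open carrying the induced log structure --- is at issue. (Moreover, for $D'\subseteq D$ the identity of $\ul X$ induces a log morphism $(\ul X,D)\to(\ul X,D')$, so the map between sections goes the other way from what you write.)

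The real problem is that your argument for case (b) has a genuine gap. (i) The cofiber sequence $M^{\rm eff}(Bl_Z(X),E)\to M^{\rm eff}(X)\to \mathrm{Th}(N_ZX)$ applies only when the closed complement $Z$ is smooth and in normal crossing position with $\partial X$; for an arbitrary dense open $\ul U$ the complement is an arbitrary closed subscheme, and the third term is in any case built from the blow-up with $E$ added to the boundary, not from $M^{\rm eff}(U)$. (ii) Even in the good case, connectivity of the Thom space is not enough, and your parenthetical ``$\pi_0$ only receives --- never kills --- sections'' is backwards: the fiber sequence of mapping spectra gives that the kernel of $F(X)\to F(\,\cdot\,)$ is the image of $\pi_0\Map(\mathrm{Th}(N_ZX),F)=\Hom(h_0^{\bcube}(\mathrm{Th}(N_ZX)),F)$, so you need the reduced Thom space to be $1$-connective. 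For the trivial bundle this is exactly the statement that $h_0^{\bcube}(\P^1,\triv)=\Z$ and $\tilde M^{\rm eff}(\P^1,\triv)$ is $1$-connective, i.e.\ Propositions \ref{prop:h0-X} and \ref{prop:h0-gm} of this paper --- whose proofs themselves invoke Theorem \ref{thm:purity}. Your route is therefore circular relative to the material you are allowed to use. (iii) ``Injectivity can be checked stalkwise'' does not apply here: the claim is not the injectivity of a map of sheaves but of a single restriction map of sections, and the reduction to henselian local rings is itself a substantive step (in the classical $\A^1$-invariant case it requires Voevodsky's injectivity/moving arguments). As written, your sketch never uses log transfers or any specific feature of $\logCI(k,\Z)$, and an argument of this generality cannot be correct.
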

    Recall from  \cite[(4.3.4)]{BPO} that the functor $\omega:X\mapsto X^{\circ}$ induces an adjunction
	\begin{equation}\label{omegaadjunction}
		\begin{tikzcd}
			\Shv_{\dNis}^{\ltr}(k,\Z)\arrow[rr,shift left=1.5ex,"\omega_\sharp"]&& \Shv_{\Nis}^{\tr}(k,\Z)\arrow[ll,"\omega^*"],
		\end{tikzcd}
	\end{equation} 
	where the right hand side is Voevodsky's category of Nisnevich sheaves with transfers. These functors are very explicit: for $Y\in \SmlSm(k)$, $\omega_{\sharp} F(Y) = F(Y,\textrm{triv})$ and for $X\in \Sm(k)$, $\omega^*F(X)=F(\underline{X}-|\partial X|)$. Since $\omega$ is monoidal by construction, $\omega_\sharp$ is monoidal, and it has a left adjoint $\omega^\sharp$ given by the formula $\omega^\sharp \Z_{\tr}(X) = \Z_{\ltr}(X,\triv)$ and extended by colimits. By construction, the functors $\omega^\sharp$ and $\omega_\sharp$ are exact so we will also call denote their derived functor $\omega^\sharp$ and $\omega_\sharp$ without confusion. 
    
    Let $F\in \logCI(k,\Z)$: by purity, Theorem \ref{thm:purity}, the unit of the adjunction gives an injective map\begin{equation}\label{eq:purity-omega}
        \iota F\hookrightarrow \omega^*\omega_\sharp \iota F
    \end{equation}
    By construction, the map above is an isomorphism if and only if $\omega_\sharp \iota F$ is (strictly) $\A^1$-invariant in the sense of Voevodsky. In particular, let $\HI(k,\Z)$ be the category of (strictly) $\A^1$-invariant Nisnevich sheaves with transfers: we have that if $F\in \HI(k,\Z)$ if and only if $\omega^*F\in \logCI(k,\Z)$.
    
    Combining all these properties, we obtain that (see \cite[Proposition 0.1]{BindaMericierratum}:
    \begin{thm}\label{thm:conserva}
        The functor $\omega_\sharp \iota\colon \logCI(k,\Z)\to \Sh^{\tr}(k,\Z)$ is faithful and exact, in particular it is conservative. 
    \end{thm}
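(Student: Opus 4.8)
The plan is to establish the three assertions in turn; faithfulness is the one that uses genuine input, namely the purity statement of Theorem~\ref{thm:purity}, while exactness and conservativity are formal. Exactness is immediate: since $\iota$ admits both a left adjoint $h_0^{\bcube}$ and a right adjoint $h^0_{\bcube}$ it preserves all limits and colimits, hence is exact as a functor of abelian categories, and $\omega_\sharp$ is exact by construction (as recalled above); therefore the composite $\omega_\sharp\iota$ is exact.

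For faithfulness, I would take a morphism $\phi\colon F\to G$ in $\logCI(k,\Z)$ with $\omega_\sharp\iota\phi=0$ and deduce $\phi=0$. Writing $\eta$ for the unit of the adjunction $\omega_\sharp\dashv\omega^*$, naturality of $\eta$ at $\iota\phi$ gives a commutative square
\[
\begin{tikzcd}
\iota F \ar[r,"\eta_{\iota F}"] \ar[d,"\iota\phi"'] & \omega^*\omega_\sharp\iota F \ar[d,"\omega^*\omega_\sharp\iota\phi"] \\
\iota G \ar[r,"\eta_{\iota G}"'] & \omega^*\omega_\sharp\iota G .
\end{tikzcd}
\]
By \eqref{eq:purity-omega}, which is exactly the content of Theorem~\ref{thm:purity}, the horizontal arrows are injective. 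Since $\omega_\sharp\iota\phi=0$ the right vertical arrow is zero, so $\eta_{\iota G}\circ\iota\phi=0$; injectivity of $\eta_{\iota G}$ forces $\iota\phi=0$, and since $\iota$ is fully faithful we conclude $\phi=0$. Hence $\omega_\sharp\iota$ is faithful.

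Conservativity then follows formally: a faithful additive functor reflects zero objects (if $F\neq0$ then $\operatorname{id}_F\neq0$, hence $\operatorname{id}_{\omega_\sharp\iota F}\neq0$ and $\omega_\sharp\iota F\neq0$), so combining this with exactness, whenever $\omega_\sharp\iota\phi$ is an isomorphism we get $\omega_\sharp\iota(\Ker\phi)=\Ker(\omega_\sharp\iota\phi)=0$ and $\omega_\sharp\iota(\Coker\phi)=0$, whence $\Ker\phi=\Coker\phi=0$ and $\phi$ is an isomorphism. The only substantial ingredient is the purity theorem; granting it, the rest is a short diagram chase, so I do not anticipate any real obstacle.
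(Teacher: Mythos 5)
Your proof is correct and follows exactly the route the paper intends: the purity theorem gives the injective unit $\iota F\hookrightarrow\omega^*\omega_\sharp\iota F$ of \eqref{eq:purity-omega}, which yields faithfulness by monomorphism cancellation, while exactness comes from $\iota$ having both adjoints and $\omega_\sharp$ being exact, and conservativity is the formal consequence you describe (compare the paper's use of this in Corollary \ref{cor:dio-cane}). No gaps.
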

    We will use several times the following immediate corollary of the theorem above:
    \begin{cor}\label{cor:dio-cane}
        Let $f\colon F\to G$ be a map in $\logCI(k,\Z)$. Then $\omega_\sharp \iota f$ is injective (resp. surjective) if and only if $f$ is injective (resp. surjective)
        \proof Let $Q$ be the kernel (resp. cokernel) of $f$ in $\logCI(k,\Z)$, then $f$ is injective (resp. surjective) if and only if $Q = 0$. Since $\omega_\sharp\iota $ is exact, $\omega_\sharp \iota Q$ is the kernel (resp. cokernel) of $\omega_\sharp \iota f$. Since $\omega_\sharp\iota$ is conservative, we conclude that $\omega_\sharp Q = 0$ if and only if $Q=0$, which concludes the proof.
    \end{cor}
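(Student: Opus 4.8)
The plan is to reduce both equivalences to the conservativity of $\omega_\sharp\iota$ recorded in Theorem~\ref{thm:conserva}, by passing to kernels and cokernels. First I would fix $f\colon F\to G$ in $\logCI(k,\Z)$ and let $K$ and $C$ be its kernel and cokernel formed in the abelian category $\logCI(k,\Z)$. Since the inclusion $\iota$ admits both a left adjoint $h_0^{\bcube}$ and a right adjoint $h^0_{\bcube}$, it preserves all limits and colimits, so $\iota K=\Ker(\iota f)$ and $\iota C=\Coker(\iota f)$ in $\Shv^{\ltr}_{\dNis}(k,\Z)$; in other words, forming (co)kernels in $\logCI(k,\Z)$ is compatible with the ambient sheaf category, which is the only structural point one needs.

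For injectivity, I would note that $f$ is a monomorphism in $\logCI(k,\Z)$ if and only if $K=0$, and, $\iota$ being fully faithful, this holds if and only if $\iota K=0$. Applying the exact functor $\omega_\sharp$ to $0\to \iota K\to \iota F\xrightarrow{\iota f}\iota G$ identifies $\omega_\sharp\iota K$ with $\Ker(\omega_\sharp f)$, so $\omega_\sharp f$ is injective exactly when $\omega_\sharp\iota K=0$. By Theorem~\ref{thm:conserva} the functor $\omega_\sharp\iota$ is faithful and exact, hence detects the zero object, so $\omega_\sharp\iota K=0$ is equivalent to $K=0$, i.e.\ to $f$ being injective. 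The surjectivity claim goes through verbatim with $C$ in place of $K$: $f$ is an epimorphism iff $C=0$ iff $\iota C=0$; exactness of $\omega_\sharp$ gives $\omega_\sharp\iota C=\Coker(\omega_\sharp f)$; and conservativity of $\omega_\sharp\iota$ identifies the vanishing of this cokernel with the surjectivity of $f$.

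I do not expect a genuine obstacle here: every ingredient (exactness of $\omega_\sharp$, the two-sided adjunction for $\iota$, faithfulness and exactness of $\omega_\sharp\iota$) is quoted directly from the results above. The only mild subtlety is the bookkeeping that kernels and cokernels must be computed inside $\logCI(k,\Z)$ rather than naively at the level of presheaves, which is exactly what the adjunctions make harmless, together with the elementary fact that an exact faithful functor between abelian categories reflects vanishing of objects.
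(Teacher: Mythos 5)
Your argument is correct and is essentially the paper's own proof: both form the kernel/cokernel $Q$ inside $\logCI(k,\Z)$, use the adjunctions for $\iota$ and the exactness of $\omega_\sharp$ to identify $\omega_\sharp\iota Q$ with the kernel/cokernel of $\omega_\sharp f$, and conclude by the conservativity of $\omega_\sharp\iota$ from Theorem~\ref{thm:conserva}. Your extra care in spelling out that $\iota$ preserves (co)limits via its two adjoints is a slightly more explicit version of the paper's remark that $\logCI(k,\Z)$ is closed under kernels and cokernels.
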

    
    The goal of this paper is to improve Theorem \ref{thm:conserva} and prove that $\omega_\sharp \iota$ is indeed fully faithful.
    To do so, we will use some techniques from \cite[\S 3]{mericicrys}, that we now recall.
    The first useful result is the following (see \cite[Lemma 3.1]{mericicrys}):
    \begin{lemma}\label{lem:vanish-coh}
        Let $F\in \logCI(k,\Z)$, $K$ a function field over $k$ and $U\subseteq \A^1_K$ open. Then $H^q(U,F) =0$ for $q>0$. 
    \end{lemma}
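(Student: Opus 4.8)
The plan is to reduce the statement to the $\bcube$-invariance property of $F$ together with the log purity result of Theorem \ref{thm:purity}. First I would note that $U \subseteq \A^1_K$ open is the complement of finitely many closed points, so $\ol{U} := \P^1_K$ with the log structure given by the divisor $D = (\P^1_K \setminus U)$ sits in $\SmlSm(K)$ (base-changed along $\Spec K \to \Spec k$; since $K$ is a function field over the perfect field $k$, $\logCI(K,\Z)$ is well-behaved and $F$ restricts to it). The key point is that $(\ol{U}, D)$ admits a compactification by $\bcube$-type objects: more precisely, the object $(\P^1_K, D)$ with $D$ a reduced effective divisor is, up to the localization, built from copies of $\bcube = (\P^1, \infty)$ and affine lines with trivial log structure, for which $H^q$ vanishes for $q > 0$ by the $\bcube$-invariance of $F$ (which gives $R\Gamma(\ol{U}, F) \simeq R\Gamma(\ol{U} \times \bcube, F)$) combined with the fact that $\Z_{\ltr}(\bcube) \to \Z_{\ltr}(\Spec K)$ becomes an isomorphism after $L_{(\dNis,\bcube)}$.

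More concretely, I would argue as follows. By Theorem \ref{thm:purity}, the restriction $F(\P^1_K, D) \hookrightarrow F(U, \triv)$ is injective; but what we really need is the higher cohomology, so instead I would use the local-to-global / Gysin-type device. The cleanest route: embed $U$ into $\bcube_K$ (i.e. realize $U$ as an open of $\P^1_K$ avoiding $\infty$, so $U \subseteq \A^1_K \subseteq \bcube_K$); then $H^q(U, F)$ can be computed via the dividing Nisnevich cohomology, and since $\A^1_K$ itself satisfies $H^q(\A^1_K, F) = H^q(\Spec K, F) = 0$ for $q > 0$ (the first equality by $\bcube$-invariance applied to the open $\A^1 \subseteq \bcube$, the second since $\Spec K$ has no higher cohomology for any sheaf), it suffices to control the difference between $\A^1_K$ and $U$. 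Writing $\A^1_K \setminus U = \{x_1, \dots, x_r\}$, a Mayer--Vietoris / localization argument in the dividing Nisnevich topology reduces this to the vanishing of higher cohomology of $F$ on the Nisnevich-local rings at the $x_i$ and on punctured henselian traits, which again follows from purity (Theorem \ref{thm:purity}) together with the semilocal case: over the semilocalization of $\A^1_K$ at the $x_i$, $F$ has no higher cohomology because it is $\bcube$-invariant and the semilocal ring of a smooth curve is a filtered colimit of opens of $\A^1$.

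The step I expect to be the main obstacle is making the passage from $\A^1_K$ to the open $U$ rigorous: one needs a clean excision/localization sequence in $\logDM$ relating $M^{\rm eff}(U, \triv)$, $M^{\rm eff}(\A^1_K, \triv)$, and the "boundary" contributions at the removed points, and then to know that the boundary contributions have no $\pi_{<0}$ against $F$, i.e. that $\mathrm{Hom}_{\logDM}(M^{\rm eff}(\text{boundary})[q], F) = 0$ for $q > 0$. This is exactly where log purity enters: the boundary term is (a twist of) a Thom space which, after the $\bcube$-invariance reduction, is controlled by $F$ evaluated on smooth proper curves, and the needed vanishing is the content of the $\bcube$-invariance of $F$ applied to $\P^1_K$. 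I would therefore structure the proof so that the only external inputs are Theorem \ref{thm:purity}, $\bcube$-invariance of $F$, and the triviality of higher cohomology of any sheaf on $\Spec K$, and push all the geometry into the identification of $U$ as an iterated extension of such pieces in the dividing Nisnevich topology.
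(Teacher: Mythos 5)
The paper does not actually prove this lemma: it is imported verbatim from \cite[Lemma 3.1]{mericicrys}, so there is no internal proof to compare against. Judged on its own terms, your proposal has a genuine circularity at its central step. You assert that $H^q(\A^1_K,F)=H^q(\Spec K,F)=0$ for $q>0$, ``the first equality by $\bcube$-invariance applied to the open $\A^1\subseteq\bcube$''. But strict $\bcube$-invariance of $F$ only gives $R\Gamma(X,F)\simeq R\Gamma(X\times\bcube,F)$, i.e.\ it controls the cohomology of the log scheme $\bcube_K=(\P^1_K,\infty)$, not of $(\A^1_K,\triv)$. The open immersion $(\A^1_K,\triv)\to\bcube_K$ induces a map $R\Gamma(\bcube_K,F)\to R\Gamma(\A^1_K,F)$ which is already not an isomorphism on $H^0$ in general (the whole point of $\logCI$ is that $F(\A^1_K,\triv)\neq F(K)$; purity, Theorem \ref{thm:purity}, gives injectivity of sections, nothing more), and nothing you invoke forces it to be surjective on $H^1$. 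So the vanishing $H^q(\A^1_K,F)=0$ for $q>0$ is exactly (a special case of) the statement to be proved, not a consequence of $\bcube$-invariance. The same circularity reappears at the end, where you reduce to semilocal rings of smooth curves and then justify their vanishing because they are ``filtered colimits of opens of $\A^1$'' --- that is again the lemma itself.

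The genuinely easy part of the statement is $q\geq 2$, which follows from the Nisnevich cohomological dimension of a one-dimensional scheme with trivial log structure; the entire content is $q=1$, and your proposal never produces a mechanism for it. A correct argument (as in \cite[Lemma 3.1]{mericicrys} and its antecedents in the reciprocity-sheaf literature) has to work harder: one uses the coniveau exact sequence $F(\eta_U)\to\bigoplus_{x\in U_{(0)}}F(\eta^h_x)/F(\sO^h_{U,x})\to H^1_{\Nis}(U,F)\to 0$ (valid since $H^1(\eta_U,F)=0$ and $\mathrm{cd}_{\Nis}U\leq 1$) and then proves surjectivity of the first map, which is where cube-invariance and purity enter in an essential, non-formal way. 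Your closing paragraph correctly identifies that the passage from $\P^1_K$ to $U$ via boundary/Gysin contributions is the crux, but leaves it entirely open; as written, the proposal does not constitute a proof.
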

As recalled in the introduction, $\logDM(k,\Z)$ has Gysin triangles. For $K$ a function field over $k$, $\ul{U}\subseteq \P^1_{K}$ affine open containing $0$, the following commutative square in $\cD(\Sh_{\dNis}^{\ltr}(k,\Z))$
\begin{equation}\label{eq:gysin-pushout}
\begin{tikzcd}
    \Z_{\ltr}(U,0)\ar[r]\ar[d,"j"]& \Z_{\ltr}(U,\triv)\ar[d,"\ul{j}"]\\
    \Z_{\ltr}(\P^1_{K},0)\ar[r]&\Z_{\ltr}(\P^1_{K},\triv),
\end{tikzcd}
\end{equation}
becomes cartesian in $\logDM(k,\Z)$ after applying $L_{(\dNis,\bcube)}$ by \cite[Proposition 2.4.8]{BPO-SH}, hence the Gysin map for $0\subseteq U$ is equivalent (after applying $L_{(\dNis,\bcube)}$) to the composition
\begin{equation}\label{eq:gys-A1}
\Z_{\ltr}(U,\triv)\to \dfrac{\Z_{\ltr}(U,\triv)}{\Z_{\ltr}(U,0)}\to \dfrac{\Z_{\ltr}(\P^1_{K},\triv)}{\Z_{\ltr}(\P^1_{K},0)}.
\end{equation}
    From the construction of \cite[(3.16)]{mericicrys}, the Gysin maps above equip $F\in \logCI(k,\Z)$ with residue maps $F(\eta_x) \to F(\A^1_{k(x)})/F(k(x))$ for every DVR essentially smooth over $k$ with fraction field $\eta$ and residue field $k(x)$ (see the proof of Proposition \ref{prop;star} where the construction is recalled and used), then the obstruction is encoded by these residue maps (see \cite[Proposition 3.19]{mericicrys}):
    \begin{prop}\label{prop:residues}
        For $F, G \in \logCI(k,\Z)$, any map $\phi \colon \omega_\sharp F \to \omega_\sharp G$ that respects the residues lifts uniquely to a map of log sheaves $\phi \colon F \to G$.
    \end{prop}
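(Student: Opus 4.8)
The plan is to reduce, using purity (Theorem~\ref{thm:purity}), to a valuative criterion for a section to extend across a boundary divisor, after which the hypothesis on residues finishes the argument. Uniqueness costs nothing: by Theorem~\ref{thm:conserva} the functor $\omega_\sharp\iota$ is faithful, so at most one lift of $\phi$ can exist.

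For existence, I would start from the purity monomorphisms \eqref{eq:purity-omega}, namely $\iota F\hookrightarrow\omega^{*}\omega_\sharp\iota F$ and $\iota G\hookrightarrow\omega^{*}\omega_\sharp\iota G$, and apply the exact functor $\omega^{*}$ to $\phi$ to obtain a morphism $\omega^{*}\phi\colon\omega^{*}\omega_\sharp\iota F\to\omega^{*}\omega_\sharp\iota G$ of dividing Nisnevich sheaves with log transfers. It is enough to show that $\omega^{*}\phi$ sends the subsheaf $\iota F$ into the subsheaf $\iota G$: the corestriction is then a morphism $F\to G$ in $\logCI(k,\Z)$ by full faithfulness of $\iota$, and a chase of the unit and counit of the adjunction \eqref{omegaadjunction} shows that applying $\omega_\sharp$ to it recovers $\phi$; combined with uniqueness, this gives the desired unique lift. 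Unwinding the definitions, what must be proved is: for every $X=(\ul X,\partial X)\in\SmlSm(k)$, writing $D=|\partial X|$, and every $a\in F(\ul X,\partial X)$, the section $\phi(a)\in G(\ul X-D,\triv)=\omega^{*}\omega_\sharp\iota G(X)$ already lies in the subgroup $G(\ul X,\partial X)$.

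The key input would be a valuative criterion: a section $b\in G(\ul X-D,\triv)$ lies in $G(\ul X,\partial X)$ if and only if, for every trait $\Spec\sO_x$ essentially smooth over $k$ mapping to $\ul X$ with closed point sent to a codimension-one point of $D$ and generic point $\eta_x$ sent into $\ul X-D$, the residue $\partial^{G}_{x}$ of $b|_{\eta_x}$ vanishes. I would prove this by combining purity (Theorem~\ref{thm:purity}) and the residue exact sequences over traits with a semi-purity argument to the effect that extendability of a section across $\partial X$ is detected at the codimension-one points of $D$ — equivalently, that the boundary quotient $\omega^{*}\omega_\sharp\iota G/\iota G$ has no section supported in codimension $\geq 2$. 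Granting the criterion, the proof concludes quickly: for $a\in F(\ul X,\partial X)$ and any such trait, $a|_{\eta_x}\in F(\sO_x,x)$, so $\partial^{F}_{x}(a|_{\eta_x})=0$ because the residue vanishes on sections that extend together with the log structure (functoriality of the Gysin maps \eqref{eq:gysin-pushout}--\eqref{eq:gys-A1}); since $\phi$ respects the residues, $\partial^{G}_{x}\big(\phi(a)|_{\eta_x}\big)=\phi\big(\partial^{F}_{x}(a|_{\eta_x})\big)=0$, and therefore $\phi(a)\in G(\ul X,\partial X)$.

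The step I expect to be the main obstacle is this valuative criterion, and within it the semi-purity input: that extendability of a section across a simple normal crossing boundary can be tested one component at a time, at the generic points of $D$. The remaining ingredients — the residue exact sequence over a trait and the vanishing of the residue on sections that extend, both part of the Gysin/residue formalism recalled in Section~\ref{ssec:logDM} and \cite[\S 3]{mericicrys}, the cohomological vanishing of Lemma~\ref{lem:vanish-coh}, and the formal bookkeeping with $\omega^{*}$, $\omega_\sharp$ and the adjunction units — should be routine.
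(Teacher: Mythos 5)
First, a caveat: this paper does not actually prove Proposition \ref{prop:residues} --- it is imported verbatim from \cite[Proposition 3.19]{mericicrys} --- so there is no internal proof to compare with; I can only judge your argument on its own terms and against the formalism recalled in Section \ref{ssec:logDM}. Your skeleton is the natural one and surely matches the cited proof in outline: uniqueness from faithfulness (Theorem \ref{thm:conserva}), existence by corestricting $\omega^*\phi$ along the purity embeddings \eqref{eq:purity-omega}, and reduction to the codimension-one points of the boundary.

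The problem is the ``valuative criterion'', which you correctly flag as the main obstacle but then formulate incorrectly, and the error is not cosmetic. By the construction recalled around \eqref{eq:Gabber}, the residue on $G(\eta_x^h)$ is induced by $G(\eta_x^h)\to G(\eta_x^h)/R\Gamma(\ul{X_x^h},G)$, where $\ul{X_x^h}$ carries the \emph{trivial} log structure; its kernel therefore consists of sections extending with trivial log structure, whereas the log-extendable sections $G(\ul{X_x^h},x)$ surject onto $i_*H^1(\P^1_{k(x)},\omega_\sharp G)$ by the extension \eqref{eq:Gysin-es}, which is nonzero in general --- indeed, if it always vanished then $\omega_\sharp G$ would be $\A^1$-invariant and the proposition would be vacuous. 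So ``$b$ extends with log poles if and only if $\partial_x^G(b)=0$'' is false, and your closing step ($\partial_x^F(a|_{\eta_x})=0$, hence $\partial_x^G(\phi(a)|_{\eta_x})=0$, hence $\phi(a)$ extends) is aimed at the wrong target: it would show that $\phi(a)$ extends with \emph{trivial} log structure. The correct local statement is that $G(\ul{X_x^h},x)$ is the preimage under the residue of the image of the extension class $\mathrm{Gys}_0^G$ of \eqref{eq:Gysin-es}, and the hypothesis that $\phi$ ``respects the residues'' is precisely the compatibility of $\phi$ with $\mathrm{Gys}_0^F$ and $\mathrm{Gys}_0^G$ needed to transport membership in this preimage from $F$ to $G$. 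Repairing the argument requires (i) this corrected local criterion over henselian traits and (ii) the semi-purity/gluing statement that membership in $G(\ul{X},\partial X)$ is detected at the codimension-one generic points of $|\partial X|$; these two points are the actual content of \cite[Proposition 3.19]{mericicrys}, and neither is supplied.
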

    Let $F\in \logCI(k,\Z)$, so by $\bcube$-invariance we have that \[
	\Map(\Z_{\ltr}(\P^1_K,\infty)[-1],F)\simeq\Map(\Z_{\ltr}(\P^1_K,0)[-1],F)\simeq F(K)[1].
    \] 
    Recall that by \cite[Proposition 2.4.9]{BPO-SH}, the choice of the $K$-ratioanl point $[1:1]$ gives an isomorphism $F(\P^1_K,\triv)\cong F(K,\triv)$ and a splitting \[
    \Map(\Z_{\ltr}(\P^1_{K},\triv)[-1],F)\simeq F(K,\triv)[1]\oplus \tau_{\leq -1}\Map(\Z_{\ltr}(\P^1_{K},\triv)[-1],F)
    \] 
    By cohomological dimension, we have $\tau_{\leq -1}\Map(\Z_{\ltr}(\P^1_{K},\triv)[-1],F)\simeq H^1_{\Nis}(\P^1_{K},\omega_{\sharp} F)$,
    therefore \[
    \Map\Bigl(\dfrac{\Z_{\ltr}(\P^1_{K},\triv)}{ \Z_{\ltr}(\P^1_K,0)}[-1],F\Bigr)\simeq H^1_{\Nis}(\P^1_{K},\omega_{\sharp} F).
    \]
    Moreover, by \cite[Lemma 3.1]{mericicrys}, we have that $H^q_{\dNis}(U,F)=0$ for $q>0$ and $U\subseteq \A^1_{K}$ dense open, therefore\[
    \Map(\Z_{\ltr}(U,\triv)[-1],F)\simeq \omega_\sharp F(U)[1].
    \] 
    Therefore, by applying the functor $\Map((-)[-1],F)$ the map \eqref{eq:gys-A1} above gives for all $U\subseteq \A^1_{K}$ open containing $0$ a map
    \begin{equation}\label{eq:Thom}
			H^1_{\Nis}(\P^1_{K},\omega_{\sharp} F)\simeq \Map\Bigl(\dfrac{\Z_{\ltr}(\P^1_{K},\triv)}{ \Z_{\ltr}(\P^1_K,0)}[-1],F\Bigr)\to \Map(\Z_{\ltr}(U,\triv)[-1],F)\simeq \omega_\sharp F(U)[1]
	\end{equation}
    compatible with open embeddings $V\subseteq U$.
	Let $i\colon \{0\}\hookrightarrow\A^1_{K}$: then by restricting to the small Zariski site of $\A^1_K$, the map \eqref{eq:Thom} gives a map in $\cD(\Sh((\A^1_{K})_{\Zar},\Z)$\[
	\mathrm{Gys}_{0}^F\colon i_*H^1({\P^1_{K}},\omega_\sharp F)\to F_{\A^1_{K}}[1],
	\] 
	where $H^1(\P^1_{K},\omega_{\sharp}F)$ on the left hand side is seen as a (constant) sheaf on the small site of $\{0\}$.
    We have the following theorem (see \cite[Theorem 3.26]{mericicrys}):
	\begin{thm}\label{thm:Gysin}
		Let $F,G\in \logCI(k,\Z)$, and let $\phi\colon \omega_\sharp F\to \omega_\sharp G$ be a map of sheaves with transfers. If for all function fields $K$ over $k$ the maps $\mathrm{Gys}_0^F$ and $\mathrm{Gys}_0^G$ fit into a commutative diagram in $\cD(\Sh((\A^1_{K})_{\Zar}))$\[
		\begin{tikzcd}
			i_*H^1({\P^1_{K}},\omega_\sharp F)\ar[r,"\mathrm{Gys}_{0}^F"]\ar[d,"i_*H^1(\phi_{\P^1_{K}})"'] &F_{\A^1_{K}}[1]\ar[d,"\phi_{\A^1_{K}}"]\\
			i_*H^1({\P^1_{K}},\omega_\sharp G)\ar[r,"\mathrm{Gys}_{0}^G"] &G_{\A^1_{K}}[1],
		\end{tikzcd}
		\]
		then $\phi$ respects the residues, and so there is a (unique) map $\widetilde{\phi}\colon F\to G$ that lifts $\phi$.
        \end{thm}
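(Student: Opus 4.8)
The plan is to deduce the statement from Proposition~\ref{prop:residues}: that result already produces the unique lift $\widetilde\phi\colon F\to G$ out of any $\phi\colon\omega_\sharp F\to\omega_\sharp G$ that \emph{respects the residues}, so the whole task is to show that the commutativity hypothesis of the theorem forces $\phi$ to respect the residue map $\partial_T^F\colon F(\eta_T)\to F(\A^1_{\kappa_T})/F(\kappa_T)$ attached to every trait $T$ essentially smooth over $k$, with generic point $\eta_T$ and residue field $\kappa_T$. Put differently, one has to match the abstract residue data of \cite{mericicrys} with the concrete Gysin extension~\eqref{eq:Gysin-es} over $\A^1_K$, and then read off that the hypothesis is exactly compatibility of $\phi$ with the latter for all $K$.

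First I would recall, following the proof of Proposition~\ref{prop;star}, that $\partial_T^F$ is extracted from the Gysin triangle of the closed point $x\hookrightarrow T$: by the cartesian square~\eqref{eq:gysin-pushout} and the factorization~\eqref{eq:gys-A1}, upon applying $\Map(-,F)$ and Lemma~\ref{lem:vanish-coh} (vanishing of higher dividing-Nisnevich cohomology on affine opens of $\A^1_{\kappa_T}$) together with the map~\eqref{eq:Thom}, this triangle carries exactly the information of the extension class of $\mathrm{Gys}_0^F$, i.e. of~\eqref{eq:Gysin-es} with $K=\kappa_T$. Writing $S$ for $\Spec\mathcal{O}^h_{\A^1_K,0}$ equipped with its log structure at the closed point, the generic stalk of the Zariski sheaf $F_{(\A^1_K,0)}$ is $\omega_\sharp F(\Spec K(t))$ (the summand $i_*H^1(\P^1_K,\omega_\sharp F)$ being supported at $0$), and taking sections of~\eqref{eq:Gysin-es} over $\ul S$, using $H^1(\ul S,\omega_\sharp F)=0$, yields a short exact sequence $0\to\omega_\sharp F(\ul S)\to F(S)\to H^1(\P^1_K,\omega_\sharp F)\to 0$; since $F(S)\hookrightarrow F(\eta_S)$ by Theorem~\ref{thm:purity}, the resulting surjection is the residue for the standard trait $\ul S$.

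It then remains to reduce a general trait to this standard one. Since $k$ is perfect and $T$ is essentially smooth, $\kappa_T$ is separably generated over $k$, there is a coefficient field $\kappa_T\hookrightarrow\mathcal{O}_T$, and --- choosing a uniformizer and using that $\Omega^1_{\mathcal{O}_T/\kappa_T}$ is free of rank one --- the henselization $\mathcal{O}_T^h$ is identified with the henselization of $\A^1_{\kappa_T}$ at $0$. Because Gysin triangles are natural for étale morphisms of such pairs and compatible with passage to henselizations (and with the injection $F(\eta_T)\hookrightarrow F(\Frac(\mathcal{O}_T^h))$ coming from Theorem~\ref{thm:purity}), $\partial_T^F$ is obtained from the standard residue over $\A^1_{\kappa_T}$ by restriction along $T\to\A^1_{\kappa_T}$. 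Hence commutativity of $\phi$ with $\mathrm{Gys}_0^F$ and $\mathrm{Gys}_0^G$ over every function field $K$ propagates to commutativity of $\phi$ with every $\partial_T^F$ and $\partial_T^G$; thus $\phi$ respects the residues and Proposition~\ref{prop:residues} supplies the unique lift $\widetilde\phi\colon F\to G$.

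The main obstacle is the middle step: faithfully identifying the abstract residue --- which, as stressed in the Introduction, is built from a zig-zag of log schemes with nontrivial log structures --- with the concrete $\A^1_K$-extension~\eqref{eq:Gysin-es}, and checking that each identification is natural in $\phi$. Functoriality of $\Map(-,F)$ makes~\eqref{eq:Thom}, and hence $\mathrm{Gys}_0^F$, functorial in $F$, so the squares in the hypothesis do transport correctly; the delicate points are (i) that $\mathrm{Gys}_0^F$ genuinely lives in $\cD(\Sh((\A^1_K)_{\Zar}))$ with the Zariski-local structure used in~\eqref{eq:Gysin-es}, which needs the compatibility of~\eqref{eq:Thom} with open immersions $V\subseteq U$ recorded there, and (ii) that the notion ``respects the residues'' of \cite{mericicrys}, which bundles together all traits with their specialization maps, is compatible with the reduction to standard traits; this ought to follow from naturality of the Gysin triangles, but it is where the bookkeeping is heaviest.
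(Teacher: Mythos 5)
The paper does not actually prove this statement: Theorem~\ref{thm:Gysin} is imported verbatim from \cite[Theorem 3.26]{mericicrys}, so there is no internal proof to compare your argument against. Judged on its own terms, your overall strategy is the intended one and is consistent with how the surrounding text uses this machinery: reduce to Proposition~\ref{prop:residues}, identify the residue maps with the Gysin data over $\A^1_K$, and handle a general essentially smooth trait by reducing to the standard one over $\A^1_{k(x)}$ at $0$ --- for which the paper's own tool is the Gabber presentation \eqref{eq:Gabber} (a formally \'etale map to $\P^1_{k(x)}$ inducing a strict pro-Nisnevich square), which is cleaner and more directly compatible with the sheaf-theoretic constructions than your ad hoc choice of a coefficient field and uniformizer.

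However, as written the proposal is not a proof: the step you yourself flag as ``the main obstacle'' is precisely the mathematical content of the theorem, and it is left unverified. Two concrete issues. First, the residue attached to a trait is a homomorphism defined on the \emph{whole} group $F(\eta_x)=\omega_\sharp F(\eta_x)$ (see the map $F(\eta_x^h)\to R\Gamma(\A^1_{k(x)},F)/R\Gamma(\P^1_{k(x)},F)$ recalled in the proof of Proposition~\ref{prop;star}), whereas the surjection you extract by taking sections of \eqref{eq:Gysin-es} over $\ul{S}$ is defined only on the subgroup $F(S)\subseteq F(\eta_S)$ supplied by Theorem~\ref{thm:purity}; these do not coincide, and passing from one to the other requires working with the cofiber sequence \eqref{eq:gys-A1} and the map \eqref{eq:Thom} themselves rather than with the short exact sequence of sections of \eqref{eq:Gysin-es}. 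Second, the hypothesis of the theorem is commutativity of $\phi$ with $\mathrm{Gys}_0^F$ as a morphism in $\cD(\Sh((\A^1_K)_\Zar))$, i.e.\ an equality of classes in $\Ext^1(i_*H^1(\P^1_K,\omega_\sharp F),G_{\A^1_K})$; deducing from this that $\phi$ commutes with the residue \emph{homomorphisms} at every trait is a genuine implication that needs an argument (compatibility of \eqref{eq:Thom} with shrinking $U$, strict Nisnevich excision along \eqref{eq:Gabber}, and Lemma~\ref{lem:vanish-coh} to control the relevant $H^1$'s), not just ``bookkeeping''. Until that middle step is carried out, the proposal is a correct plan but an incomplete proof.
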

        The proof of Theorem \ref{thm:main-intro} will reduce to show that the map $\mathrm{Gys}_0^F$ agrees with the map induced by the composition in $\cD(\Sh_{\Nis}^{\tr}(k,\Z))$\[
        \Z_{\tr}(U)[-1]\xrightarrow{\ul{j}} \Z_{\tr}(\P^1)[-1]\xrightarrow{\tau} \G_m,\] 
        where the first map is the open immersion and the second map corresponds to the map given by the tautological bundle in $\Pic(\P^1)$ (see Lemma \ref{lem:gysin-is-downstairs}): this is indeed a map in $\cD(\Sh^{\tr}_{\Nis}(k,\Z))$, so it is functorial for maps $\omega_\sharp F\to \omega_\sharp G$ in $\Sh_{\Nis}^{\tr}(k,\Z)$.
    \def\h0{h_0^{\bcube}}    


\section{A computation of motivic localization}\label{appendix-A}

    In this section, we will show a general method to compute $h_0^{\bcube}(-)$ and we will apply it to compute $h_i^{\bcube}(\P^1,\triv)$ for $i=0,1$, without any assumption on the perfect field $k$.
	
	\begin{constr}\label{constr}
		Let $S\in \mathbf{Shv}^{\ltr}_{\dNis}(k,\Z)$ and let $T\in \logCI(k,\Z)$ with $f\colon S\to T$ such that $\omega_\sharp(f)$ is surjective. 
		Then we have a commutative diagram\[
		\begin{tikzcd}
			\omega_\sharp S\ar[rr]\ar[d] &&\omega_\sharp T\ar[d,equal]\\
			\omega_\sharp \h0(S)\ar[rr,"\omega_\sharp \h0(f)"]&&\omega_\sharp \h0(T)
		\end{tikzcd}
		\]
		In particular $\omega_{\sharp}\h0(f)$ is surjective, so by Corollary \ref{cor:dio-cane} the map $\h0(f)$ is surjective. 
		
		Let $L\colon S\to \h0(S)$ be the localization map, given by the unit of the adjunction $\h0\dashv \iota$, and let $E$ (resp. $E'$) be the kernel of $f$ (resp. $\h0(f)$). We fix the notation in the following commutative diagram, where the rows are by definition exact sequences:
		\begin{equation}\label{eq;diagram}
			\begin{tikzcd}
				0\ar[r] &E\ar[r,"q"]\ar[d,"L_0"] &S\ar[r,"f"]\ar[d,"L"] &T\ar[d,equal]\\
				0\ar[r] &E'\ar[r,"q'"] &\h0(S)\ar[r,"\h0(f)"] &T\ar[r]&0
			\end{tikzcd}
		\end{equation}
		Let $K$ be a function field and for each $t\in K$ consider $i_t\colon (K,\triv)\to \bcube_K$ given by  the inclusion of $[1:t]$ in $\P^1_K$. Notice that for all $F\in \logCI(k,\Z)$ and $t\in K$, the map $i_t^*\colon F(\bcube_K)\to F(K,\triv)$ is an isomorphism inverse of $F(K,\triv)\to F(\bcube_K)$ given by the structural map, so for $t,t'\in K$, $i_t^*=i_{t'}^*$. We have the following commutative diagram:
		\begin{equation}\label{eq;i0-i1}
			\begin{tikzcd}
				&\mathrm{Ker}(L_K)\ar[d]\\
				S(\bcube_K)\ar[r,"i_0^*-i_1^*"]\ar[d]\ar[ur,dotted]&S(K,\triv)\ar[d,"L_K"]\\
				\h0 S(\bcube_K)\ar[r,"i_0^*-i_1^* = 0"]&\h0S(K,\triv)
			\end{tikzcd}
		\end{equation}
	\end{constr}
	Let us consider the following condition:
	\begin{equation}\label{star}
		\tag{$\star$} q_K(E(K,\triv))\subseteq {\rm Im}(i_0^*-i_1^*)
	\end{equation}
	\begin{prop}\label{prop;star}
		In the situation of Construction \ref{constr}, if \eqref{star} is satisfied for every function field $K$, then $\h0(S)\cong T$.
	\end{prop}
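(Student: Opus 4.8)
The plan is to show that the map $\h0(f)\colon\h0(S)\to T$, which is already surjective by Construction~\ref{constr}, is an isomorphism; equivalently, that its kernel $E'$ is zero. Since $\logCI(k,\Z)$ is closed under kernels, $E'$ lies in $\logCI(k,\Z)$, so Theorem~\ref{thm:purity} applies to it: for integral $X\in\SmlSm(k)$ the divisor $\partial X$ is strict normal crossing, hence misses the generic point $\eta$ of $\ul X$, so $E'(X)$ injects into $E'(\eta,\triv)$, the trivial log structure on $\eta$ being forced; the non-integral case splits as a finite product. Thus it suffices to prove $E'(K,\triv)=0$ for every function field $K/k$.

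The one substantial input I would need is that the localization map $L\colon S\to\h0(S)$ is an epimorphism of sheaves. This holds because $\h0$, being a left adjoint, preserves colimits, every object of $\Shv_{\dNis}^{\ltr}(k,\Z)$ is a quotient of a direct sum of representables $\Z_{\ltr}(X)$, and the unit $\Z_{\ltr}(X)\to\h0(\Z_{\ltr}(X))=\pi_0 M^{\rm eff}(X)$ is surjective, $M^{\rm eff}(X)$ being represented by the singular-type complex of \cite{BPO}, concentrated in non-negative degrees with degree-$0$ term $\Z_{\ltr}(X)$. Applying the exact functor $\omega_\sharp$, we get that $\omega_\sharp L$ is an epimorphism of Nisnevich sheaves with transfers, hence surjective on stalks, so $L_K\colon S(K,\triv)\to\h0(S)(K,\triv)$ is surjective for every function field $K$. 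I will also use, from diagram~\eqref{eq;i0-i1} (resp.\ \eqref{eq;i0-i1-ltr} in the case with transfers), that $\operatorname{Im}\bigl(i_0^*-i_1^*\colon S(\bcube_K)\to S(K,\triv)\bigr)\subseteq\ker(L_K)$, which follows because $\h0(S)$ is $\bcube$-invariant, so $i_0^*=i_1^*$ on $\h0(S)(\bcube_K)$ and $L_K\circ(i_0^*-i_1^*)=0$.

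With these in hand, one finishes by a diagram chase in \eqref{eq;diagram}. Fix a function field $K$. First, $(L_0)_K\colon E(K,\triv)\to E'(K,\triv)$ is surjective: given $e'\in E'(K,\triv)$, pick $s\in S(K,\triv)$ with $L_K(s)=q'_K(e')$ by surjectivity of $L_K$; then $f_K(s)=\h0(f)_K(L_K(s))=\h0(f)_K(q'_K(e'))=0$ (by the right-hand square of \eqref{eq;diagram} and $\h0(f)\circ q'=0$), so $s\in E(K,\triv)$, and $q'_K((L_0)_K(s))=L_K(q_K(s))=L_K(s)=q'_K(e')$ (by the left-hand square of \eqref{eq;diagram}), whence $(L_0)_K(s)=e'$ since $q'_K$ is injective. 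Second, condition~\eqref{star} gives $q_K(E(K,\triv))\subseteq\operatorname{Im}(i_0^*-i_1^*)\subseteq\ker(L_K)$, so $q'_K((L_0)_K(e))=L_K(q_K(e))=0$ for all $e\in E(K,\triv)$, and injectivity of $q'_K$ forces $(L_0)_K=0$. A map that is simultaneously surjective and zero has trivial codomain, so $E'(K,\triv)=0$; as $K$ was arbitrary, $E'=0$ and $\h0(f)$ is an isomorphism.

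I expect the diagram chase itself to be routine; the content lies in the two preparatory reductions, and the delicate one is the epimorphism property of $L$ — equivalently, that $\h0$ of a sheaf is a quotient of it — since this is where one genuinely uses the construction of the $\bcube$-localization through a motivic complex. Condition~\eqref{star} then enters in exactly one place: to force the always-surjective map $(L_0)_K$ to vanish.
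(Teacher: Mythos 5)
Your diagram chase is correct, and the first half of your argument (using \eqref{star} together with diagram \eqref{eq;i0-i1} to force $(L_0)_K=0$, and purity to reduce everything to sections over function fields) coincides with the first half of the paper's proof. The gap is the input you yourself single out as "the delicate one": the claim that the localization map $L\colon S\to h_0^{\bcube}(S)$ is an epimorphism of sheaves, equivalently that the unit $\Z_{\ltr}(X)\to h_0^{\bcube}(\Z_{\ltr}(X))=\pi_0M^{\rm eff}(X)$ is surjective. Your justification — that $M^{\rm eff}(X)=L_{(\dNis,\bcube)}\Z_{\ltr}(X)$ is computed by a Suslin-type complex concentrated in non-negative degrees with bottom term $\Z_{\ltr}(X)$ — is not available in the logarithmic setting: $\bcube=(\P^1,\infty)$ is not an interval object in $\lCor(k)$ (there is no multiplication $\bcube\times\bcube\to\bcube$ extending $(x,y)\mapsto xy$), so Voevodsky's identification of the $\A^1$-localization with the singular complex does not carry over, and neither \cite{BPO} nor \cite{BindaMerici} supplies such a model for the $(\dNis,\bcube)$-localization. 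All that is known is that $h_0^{\bcube}(F)=\pi_0$ of an abstract Bousfield localization which preserves connectivity; the cofiber of $F[0]\to L_{(\dNis,\bcube)}(F[0])$ is built from shifts in \emph{both} directions of the generating cofibers, so there is no a priori reason for its $\pi_0$, i.e.\ $\operatorname{coker}(F\to h_0^{\bcube}(F))$, to vanish.

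This is not a peripheral point: it is exactly the difficulty the actual proof is designed to circumvent. From $(L_0)_K=0$ one gets $E\subseteq\ker(L)$, so $L$ factors through $p\colon S/E\to h_0^{\bcube}(S)$; if $L$ were known to be surjective one could identify $h_0^{\bcube}(S)$ with $S/E$ and conclude as you do. Since it is not, the paper must instead extend $p$ along the injective-but-not-necessarily-surjective comparison map $u\colon S/E\to T$ (only $\omega_\sharp u$ is an isomorphism), and this extension is achieved by lifting $\omega_\sharp p\circ(\omega_\sharp u)^{-1}$ through the residue criterion of Proposition \ref{prop:residues} — which is where the entire Gysin/residue analysis in the proof is spent. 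In other words, the ingredient missing from your argument is precisely the content that your shortcut would render unnecessary. If you can genuinely prove that $F\to h_0^{\bcube}(F)$ is always an epimorphism, that would be a worthwhile result in its own right and would indeed shorten this proposition dramatically; but it requires an actual argument, not the $\A^1$-style Suslin-complex model, which the cited sources do not provide.
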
 \begin{proof}
		We need to show that for all $g\colon S\to C$ with $C\in \logCI(k,\Z)$ there is a unique map $\ol{g}\colon T\to C$ that factors $g$ via $f$. Notice that if $\ol{g},\ol{g}'$ such that $\ol{g}\circ f=\ol{g}'\circ f$, by the universal property of $\h0(S)$ we have $\ol{g}\circ \h0(f)=\ol{g}'\circ \h0(f)$, and since $\h0(f)$ is surjective we get $\ol{g}=\ol{g}'$. We are then left to show that such $\ol{g}$ exists: to do so, we will show that there exists a map $T\to \h0(S)$ that factors $S\to \h0(S)$.
		The condition \eqref{star} implies that for all function fields $K$\[
		q'_K\circ (L_0)_K(E(K,\triv))\overset{\eqref{eq;diagram}}{=} L_K\circ q_KE(K,\triv) \overset{\eqref{star}}{\subseteq}L_K((i_0^*-i_1^*)(S(\bcube_K))) \overset{\eqref{eq;i0-i1}}{=}0 .
		\]
		Since $q'_K$ is injective by definition (see \eqref{eq;diagram}), we have that the map $(L_0)_K$ is the zero map. 
        
        By purity, Theorem \ref{thm:purity}, for all $X\in \SmlSm(k)$ with fraction field $K$, $\h0(S)(X)\hookrightarrow \h0(S)(K,\triv)$, so $E'(X)\hookrightarrow E'(K,\triv)$, which implies that $L_0$ is the zero map. So, we have that $E=\ker(L)$, hence the map $L$ factors through  a map $p\colon S/E\to \h0(S)$. 
		By the universal property of $L$, for all $C\in \logCI(k,\Z)$ and $g\colon S\to C$, we have a unique map $g'$ such that $g=g'\circ L$, so the following diagram commutes:\[
		\begin{tikzcd}
			S\ar[rrr,"g"]\ar[dr]&&&C\\
			&S/E\ar[r,"p"]&\h0(S)\ar[ur,swap,"g'"]
		\end{tikzcd}
		\]
		Moreover, the map $u\colon (S/E)\to T$ induces an isomorphism $\omega_{\sharp}(S/E)\cong \omega_{\sharp} T$ since $\omega_\sharp f$ is surjective by assumption and $\omega_\sharp S/E = \omega_\sharp S/\ker(\omega_\sharp f)$, so we get a map 
        \begin{equation}\label{eq:phi}
            \phi = \omega_\sharp p\circ (\omega_\sharp u)^{-1}\colon \omega_\sharp T\to \omega_\sharp \h0(S).
        \end{equation}

        Let $\ul{X}$ be the spectrum of an essentially smooth DVR over $k$ with generic point $\eta$ and closed point $x$ with residue field $k(x)$.
        Since $(S/E)_{\A^1_{k(x)}} \cong T_{\A^1_{k(x)}}$, we have by  Lemma \ref{lem:vanish-coh} that\[
        R\Gamma(\A^1_{k(x)},S/E) \simeq R\Gamma(\A^1_{k(x)},T) \simeq S/E(\A^1_{k(x)})[0].
        \]
        Recall that by Gabber's presentation lemma \cite[Theorem 3.1.1]{CTHK} (see also \cite[Proof of Lemma 4.1]{BindaMerici} for its use in the log context) there is a formally \'etale map $e\colon \ul{X}\to \P^1_{k(x)}$ inducing a strict (pro-)Nisnevich square:	\begin{equation}\label{eq:Gabber}
				\begin{tikzcd}
					(\eta_x,\triv) \ar[r]\ar[d,"{(e^\circ)}"] &(\ul{X},\triv) \ar[d,"\ul{e}"]
                    \\
					(\A^1_{k(x)},\triv)\ar[r] &(\P^1_{k(x)},\triv)
				\end{tikzcd}
			\end{equation} 
            giving for all $F\in \Sh^{\ltr}_{\dNis}(k,\Z)$ a map (we omit the ``$\triv$" for simplicity)\[
            F(\eta_x)\to \dfrac{F(\eta_x)}{R\Gamma(\ul{X},F)}\cong \dfrac{R\Gamma(\A^1_{k(x)},F)}{R\Gamma(\P^1_{k(x)},F)}
            \]
            functorial in $F$, therefore the maps $u$ and $p$ defined above give a commutative diagram\[
		\begin{tikzcd}
			T(\eta_x)\ar[d] &S/E(\eta_x)\ar[d]\ar[r,"p_{\eta_x}"]\ar[l,"\simeq","u_{\eta_x}"'] &\h0(S)(\eta_x)\ar[d]\\
			\dfrac{T(\A^1_{k(x)})}{R\Gamma(\P^1_{k(x)},T)} &\dfrac{S/E(\A^1_{k(x)})}{R\Gamma(\P^1_{k(x)},S/E)}\ar[r]\ar[l,"\simeq"] &\dfrac{\h0(S)(\A^1_{k(x)})}{R\Gamma(\P^1_{k(x)},\h0(S))}.
		\end{tikzcd}
		\]
    By composing with the maps $\Spec(k)\xrightarrow{i_0}\bcube\to (\P^1,\triv)$, the maps $u$ and $p$ give a commutative diagram\[
    \begin{tikzcd}
    \dfrac{T(\A^1_{k(x)})}{R\Gamma(\P^1_{k(x)},T)}\ar[d] &\dfrac{S/E(\A^1_{k(x)})}{R\Gamma(\P^1_{k(x)},S/E)}\ar[r]\ar[l,"\simeq"]\ar[d] &\dfrac{\h0(S)(\A^1_{k(x)})}{R\Gamma(\P^1_{k(x)},\h0(S))}\ar[d]\\
    \dfrac{T(\A^1_{k(x)})}{R\Gamma(\bcube_{k(x)},T)}\ar[d,"\simeq"] &\dfrac{S/E(\A^1_{k(x)})}{R\Gamma(\bcube_{k(x)},S/E)}\ar[d]\ar[r] \ar[l] &\dfrac{\h0(S)(\A^1_{k(x)})}{R\Gamma(\bcube_{k(x)},\h0(S))}\ar[d,"\simeq"]\\
	\dfrac{T(\A^1_{k(x)})}{T(k(x))} &\dfrac{S/E(\A^1_{k(x)})}{S/E(k(x))}\ar[r,"\frac{p_{\A^1_{k(x)}}}{p_{k(x)}}"]\ar[l,"\simeq","\frac{u_{\A^1_{k(x)}}}{u_{k(x)}}"'] &\dfrac{\h0(S)(\A^1_{k(x)})}{\h0(S)(k(x))}.
    \end{tikzcd}
    \]
    where the vertical isomorphisms come from the fact that $h_0^{\bcube}(S)$ and $T$ are strictly $\bcube$-invariant and the horizontal isomorphism from the fact that $\omega_\sharp u$ is an isomorphism.
    
		By taking inverses of $u_{\eta_x}$ and $u_{\A^1_{k(x)}}/u_{k(x)}$ as above, we conclude that the map $\phi$ of \eqref{eq:phi} induces a commutative diagram\[
		\begin{tikzcd}
			T(\eta_x)\ar[d] \ar[rrrr,"\phi_{\eta_x}"]&&&&\h0(S)(\eta_x)\ar[d]\\
			\dfrac{T(\A^1_{k(x)})}{T(k(x))} \ar[rrrr,"\frac{\phi_{\A^1_{k(x)}}}{\phi_{k(x)}}"] &&&&\dfrac{\h0(S)(\A^1_{k(x)})}{\h0(S)(k(x))},
		\end{tikzcd}
		\]
		which implies that $\phi$ respects the residues, so by Proposition \ref{prop:residues}, it lifts uniquely to a map $\phi'\colon T\to \h0(S)$. We need to show that $\phi'\circ f=L$: let $X\in \SmlSm(k)$ and let $X^{\circ}:=\ul{X}-|\partial X|$ be the open with trivial log structure. Since the map $\h0(S)(X)\hookrightarrow \h0(S)(X^{\circ},\triv)$ is injective by purity, Theorem  \ref{thm:purity}, (see \cite[(3.20) and (3.21)]{mericicrys} for a similar argument) it is enough to check the commutativity on $(X^{\circ},\triv)$, which follows automatically by the commutativity of\[
		\begin{tikzcd}
			S/E(X^{\circ})\ar[d,"u_{X^\circ}^{-1}","\simeq"']\ar[r,"p_{X^{\circ}}"] &\h0(S)(X^{\circ})\\
			T(X^{\circ})\ar[ur,"\phi_{X^{\circ}}"']
		\end{tikzcd}\]
	\end{proof}
    \begin{remark}
    The proof works verbatim for the heart of the $t$-structure of $\logDA(k,\Z)$ (see \cite{BindaMerici}).
    \end{remark}

	For $X\in \SmlSm(k)$, we let $X^\circ:=X-|\partial X|\in \Sm(k)$ and $h_i^{\A^1}(X^\circ)$ the $i$-th Suslin homology sheaf. We also write $h_i^{\bcube}(X)$ for $h_i^{\bcube}(\Z_{\ltr}(X))$. We apply the proposition above to prove the following:
	\begin{prop}\label{prop:h0-X}
		Let $X\in \SmlSm(k)$ such that $\ul{X}$ is proper. Then $h_0^{\bcube}(X)\cong \omega^*h_0^{\A^1}(X^\circ)$.
	\end{prop}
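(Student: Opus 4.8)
I will apply Proposition \ref{prop;star} with $T = \omega^* h_0^{\A^1}(X^\circ)$, the map being the composition $\Z_{\ltr}(X) \to \omega^*\omega_\sharp \Z_{\ltr}(X) = \omega^* \Z_{\tr}(X^\circ) \to \omega^* h_0^{\A^1}(X^\circ)$, where the last arrow is $\omega^*$ applied to the Suslin localization $\Z_{\tr}(X^\circ) \to h_0^{\A^1}(X^\circ)$. First I need to check the hypotheses of Construction \ref{constr}: that $T \in \logCI(k,\Z)$ and that $\omega_\sharp(f)$ is surjective. The first holds because $h_0^{\A^1}(X^\circ) \in \HI(k,\Z)$ (Suslin homology sheaves are strictly $\A^1$-invariant, at least over a perfect field; here is where I expect to need a known result from Voevodsky's theory), and then $\omega^*$ of an object of $\HI(k,\Z)$ lies in $\logCI(k,\Z)$ by the remark following \eqref{eq:purity-omega}. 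The surjectivity of $\omega_\sharp(f)$ is immediate since $\omega_\sharp f$ is (up to the canonical identification $\omega_\sharp \omega^* = \id$) just the Suslin localization $\Z_{\tr}(X^\circ) \to h_0^{\A^1}(X^\circ)$, which is an epimorphism of Nisnevich sheaves.

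**The key computation.** By Proposition \ref{prop;star}, it remains to verify condition \eqref{star}: for every function field $K/k$, $q_K(E(K,\triv)) \subseteq \Im(i_0^* - i_1^* \colon S(\bcube_K) \to S(K,\triv))$, where $S = \Z_{\ltr}(X)$, and $E = \ker(f)$. Evaluating on $(K,\triv)$, we have $S(K,\triv) = \lCor((K,\triv), X) = \Cor(\Spec K, X^\circ)$ by the explicit description of $\omega_\sharp$ and the fact that finite log correspondences out of a trivial-log scheme into $X$ with $\ul X$ proper are the same as ordinary finite correspondences into $X^\circ$ (one must check here that every finite correspondence extends — this uses properness of $\ul X$, so that the closure of an elementary correspondence is automatically proper over $\Spec K$, and that the inverse-image log structure condition on the normalization is vacuous over a point with trivial log structure). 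Similarly $S(\bcube_K) = \Cor(\A^1_K, X^\circ)$, and the two maps $i_0^*, i_1^*$ become the two evaluation maps at $0$ and $1$. Now $E(K,\triv) = \ker(\Cor(\Spec K, X^\circ) \to h_0^{\A^1}(X^\circ)(\Spec K))$, which by the very definition of Suslin homology (the cokernel of $\partial_0 - \partial_1 \colon \Cor(\A^1_K, X^\circ) \to \Cor(\Spec K, X^\circ)$, Nisnevich-sheafified, but over a field the sheafification is harmless for sections over $K$) is precisely $\Im(\partial_0 - \partial_1)$. Thus \eqref{star} holds — in fact with equality.

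**The main obstacle.** The delicate point is the identification $S(\bcube_K) = \Cor(\A^1_K, X^\circ)$ and the compatibility of $i_0^* - i_1^*$ with the Suslin boundary $\partial_0 - \partial_1$, together with the assertion that sheafification does not change the relevant sections. Concretely: $\Z_{\ltr}(X)$ as a \emph{presheaf} on $\lCor(k)$ has $\Z_{\ltr}(X)(\bcube_K) = \lCor(\bcube_K, X)$, and I must argue this equals the ordinary correspondence group $\Cor((\bcube_K)^\circ, X^\circ) = \Cor(\A^1_K, X^\circ)$. A finite log correspondence from $\bcube_K = (\P^1_K, \infty)$ to $X = (\ul X, \partial X)$ is an elementary correspondence $V^o \subset \A^1_K \times X^\circ$ whose closure $V \subset \P^1_K \times \ul X$ is finite and surjective over $\P^1_K$ (automatic properness over $\P^1_K$ since $\ul X$ is proper), subject to the log condition that $V^N \to X$ is a map of log schemes, i.e. the pullback of $\partial X$ to $V^N$ is dominated by the pullback of $\partial(\bcube_K) = \infty$ — but $V^o$ lives over $\A^1_K$, away from $\infty$, so this is a condition only at points of $V^N$ over $\infty \in \P^1_K$, and one checks it is automatically satisfied (or imposes no constraint on $V^o$). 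This matching is essentially \cite[Lemma 4.7.2]{BPO} or the surrounding material, but I will spell it out since it is the crux; once it is in place, everything else is formal and the proposition follows from Proposition \ref{prop;star}.
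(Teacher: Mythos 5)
Your overall strategy is exactly the paper's: apply Proposition \ref{prop;star} to $f\colon \Z_{\ltr}(X)\to \omega^*h_0^{\A^1}(X^\circ)$, use the presentation of $h_0^{\A^1}$ by the Suslin complex to reduce \eqref{star} to comparing $\Im(i_0^*-i_1^*)$ with $\Im((i_0^{\A^1})^*-(i_1^{\A^1})^*)$, and for this identify $\lCor(\bcube_K,X)$ with $\Cor(\A^1_K,X^\circ)$. The setup (membership of $T$ in $\logCI(k,\Z)$, surjectivity of $\omega_\sharp f$, the harmlessness of sheafification over a field) matches the paper and is fine.

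The gap is in the surjectivity of $\lCor(\bcube_K,X)\to\Cor(\A^1_K,X^\circ)$, which is the crux and which you dispatch by a generic-fibre argument. Since $\bcube_K$ is not an object of $\lCor(k)$ (it is only essentially of finite type), $S(\bcube_K)$ means $\colim_i\lCor(\bcube_{Y_i},X)$ over finite-type models $Y_i$ of $K$, and condition \eqref{star} concerns the image of $i_0^*-i_1^*$ from \emph{that} colimit. So given $Z\in\Cor(\A^1_{Y_i},X^\circ)$ you must produce a finite log correspondence over $\bcube_{Y_j}$ for some smaller model $Y_j$, not merely over $\P^1_K$. Over a positive-dimensional base the scheme-theoretic closure $V\subseteq\P^1_{Y_i}\times\ul{X}$ is proper over $\P^1_{Y_i}$ (this is where properness of $\ul X$ enters) but in general \emph{not} finite: the fibres over points of $\infty_{Y_i}$ can be positive-dimensional, and properness plus surjectivity does not give finiteness. (Even over $K$ itself you assert finiteness while only justifying properness; there one still needs the quasi-finiteness coming from a dimension count on the $1$-dimensional $\P^1_K$.) The paper closes this gap by compactifying the correspondence to a finite correspondence $Z'\colon S\to X$ on a proper modification $S\to\P^1_{Y_i}$ (citing \cite[Theorem 1.6.2]{MotModulusI}), observing that the modification is an isomorphism outside a codimension-$2$ locus, and then shrinking $Y_i$ to $Y_j$ so that $S\times_{\P^1_{Y_i}}\P^1_{Y_j}\to\P^1_{Y_j}$ is an isomorphism; only then does one get $Z''\in\lCor(\bcube_{Y_j},X)$ restricting to $Z$. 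Your observation that the log-structure condition is automatic (because $V^o$ avoids $\A^1\times|\partial X|$, so the pullback of $|\partial X|$ is supported over $\infty$, and compactifying log structures see only the reduced divisor) is correct and is also used, briefly, in the paper; but without the spreading-out step the verification of \eqref{star} is for the wrong group and the proof is incomplete.
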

	\begin{proof}
		The map $f\colon \Z_{\ltr}(X)\to \omega^*h_0^{\A^1}(X^\circ)$ induced by adjunction from the map\[
		\omega_\sharp \Z_{\ltr}(X) = \Z_{\tr}(X^\circ)\to h_0^{\A^1}(X^\circ)
		\]
		satisfies the fact that $\omega_\sharp f$ is surjective, hence by Proposition \ref{prop;star}, it is enough to show that \eqref{star} holds in this case. Notice that we have the following commutative diagram
        \begin{equation}\label{eq:i0-i1}
		\begin{tikzcd}
			\lCor(\bcube_K,X)\ar[rr,"i_0^*-i_1^*"]\ar[d]&&\Z_{\ltr}(X)(K)\ar[d,equal]\\
			\Cor(\A^1_K,X^\circ)\ar[rr,"{(i_0^{\A^1})^*-(i_1^{\A^1})^*}"]&&\Z_{\tr}(X^\circ)(K)
		\end{tikzcd}
		\end{equation}
		Recall that by \cite[Theorem 13.8]{MVW} \[
  h_0^{\A^1}(X^{\circ})=\pi_0(C_*^{\A^1}(X^{\circ})),\]
  where $C_*^{\A^1}(X^{\circ})$ is the normalized Suslin complex of \cite[Definition 2.14]{MVW}, so we have a right exact sequence \[
  \Cor(\A^1_K,X^{\circ})\xrightarrow{(i_0^{\A^1})^*-(i_1^{\A^1})^*} \Z_{\tr}(X^{\circ})(K)\to h_0^{\A^1}(X^{\circ})(K)\to 0
  \] 
  which implies that\[
  q_K(E(K,\triv))\subseteq {\rm Im}\bigl((i_0^{\A^1})^*-(i_1^{\A^1})^*\bigr)
  \]
		We will conclude by showing that left the vertical map in \eqref{eq:i0-i1} is a bijection: if this holds, then ${\rm Im}\bigl((i_0^{\A^1})^*-(i_1^{\A^1})^*\bigr)={\rm Im}\bigl(i_0^*-i_1^*\bigr)$ and \eqref{star} holds. This map sends a correspondence $Z$ to the correspondence $\omega(Z)$, so it is clearly injective. Let $Y\in \Sm(k)$ with fraction field $K$, so that $\Spec(K)=\lim_{i\in I} Y_i$ with $Y_i\subseteq Y$ open, so that \[
  \lCor(\bcube_K,X) = \colim_{i\in I}\lCor(\bcube_{Y_i},X)\quad\textrm{and}\quad\Cor(\A^1_K,X) = \colim_{i\in I}\Cor(\A^1_{Y_i},X).
  \]
  Let $Z\in \Cor(\A^1_{Y_i}, X^\circ)$. By e.g. \cite[Theorem 1.6.2]{MotModulusI}, there is a proper birational morphism $S\to \P^1_{Y_i}$ which is an isomorphism on $\A^1_{Y_i}$ and a finite correspondence $Z'\colon S\to X$ that restricts to $Z$. By \cite[Corollary 4.4.3(b)]{Liu}, the map $S\to \P^1_{Y_i}$ is an isomorphism outside a closed subscheme $Q\subseteq \P^1_{Y_i}$ of codimension $2$. Let $Q'=\infty_{Y_i}\cap Q$: since $Q$ has codimension $2$, $\infty_{Y_i}-Q'$ is dense in $\infty_{Y_i}$, so there is an open subscheme $Y_j$ of $Y_i$ such that $S\times_{\P^1_{Y_i}}{\P^1_{Y_j}}\to {\P^1_{Y_j}}$ is an isomorphism, which implies that the restriction of $Z'$ induces a finite correspondence $Z''\colon \P^1_{Y_j}\to X$ that restricts to the pullback of $Z$ to $\A^1_{Y_j}$. In particular, the pullback of $|\partial X|$ to $Z''$ is contained in the pullback of $\infty_{Y_j}$, so $Z''\in \lCor(\bcube_{Y_j},X)$ that maps to $Z$ in $\Cor(\A^1_{Y_j},X^{\circ})$, so by passing to the limit this concludes the proof.
	\end{proof}
\begin{remark}
Notice that $\eqref{star}$ is a very strong condition to verify, and indeed Proposition \ref{prop:h0-X} is false if $X=(\ul{X},\triv)$ with $\ul{X}$ affine: indeed in that case the map $\iota$ is not surjective, as $\lCor(\bcube_K,X) = \Cor(\P^1_K,\ul{X}) = \Cor(K,\ul{X})$, therefore $i_0^*-i_1^* = 0$, but $E(K,\triv)$ is in general not zero (e.g. if $X=\A^1$). The correct replacement for this should be coming from the proper modulus pairs as in \cite{MotModulusII}.
In fact, we would like to use the method explained in Proposition \ref{prop;star} to the map considered in \cite{RulSugYama}: \[
		\G_a\otimes^{\ltr} \G_m \to \Omega^1\qquad (a,b)\mapsto a\cdot \dlog(b).
		\]
		We conjecture that this map is an isomorphism, but at the moment we are not able to verify \eqref{star} in this case. We remark that the methods of \cite{RulSugYama} do not apply here since $\Z_{\rm ltr}(\A^1)(\bcube_K)  = \Z_{\rm ltr}(\A^1)(K)$: in order to make it work we need a replacement for the modulus pair $(\P^1,2\infty)$. This is a work in progress.
	\end{remark}
    
	We can now compute the first homotopy groups of $M^{\rm eff}(\P^1,\triv)$ without any assumptions on resolutions of singularities. The square
	\begin{equation}\label{eq:MV-P1}
		\begin{tikzcd}
			(\P^1,0+\infty)\ar[r]\ar[d]&(\P^1,0)\ar[d]\\
			(\P^1,\infty)\ar[r]&(\P^1,\triv),
		\end{tikzcd}
	\end{equation}
	induces a cartesian square in $\logDM(k,\Z)$ (see \cite[Proposition 2.4.9]{BPO-SH}) giving a long exact sequence\[
	\substack{h_1^{\bcube}(\P^1,0)\\ \oplus \\h_1^{\bcube}(\P^1,\infty)}\to h_1^{\bcube}(\P^1,\triv)\to h_0^{\bcube}(\P^1,0+\infty) \to \substack{h_0^{\bcube}(\P^1,0)\\ \oplus \\h_0^{\bcube}(\P^1,\infty)}\to h_0^{\bcube}(\P^1,\triv)\to 0
	\]
	Since the inclusion of the point $[1:1]$ in $\P^1$ induces equivalences in $\logDM(k,\Z)$:\[
	\Z[0]\simeq M^{\rm eff}(\Spec(k))\simeq M^{\rm eff}(\P^1,0) \simeq M^{\rm eff}(\P^1,\infty),
	\] 
	we conclude that $h_0^{\bcube}(\P^1,\triv)\simeq h_0^{\bcube}(\Spec(k))=\Z$. Consider the map $\tau\colon \Z_{\tr}(\P^1)[-1]\to \G_m$ given by the tautological bundle in $\Pic(\P^1)$: by applying $\omega^{\sharp}$ it gives a composition in $\cD(\Sh_{\dNis}^{\tr}(k,\Z))$:
    \begin{equation}\label{eq:comp-omega-bundle}
             \tilde \tau\colon \Z_{\ltr}(\P^1,\triv)[-1]\xrightarrow{\omega^\sharp \tau} \omega^\sharp \G_m \to \omega^*\G_m,
        \end{equation}
    where the second map comes from the adjunction $\omega^\sharp\dashv\omega_\sharp\dashv\omega^*$, which gives natural transformations $\omega^\sharp\to \omega^\sharp\omega_\sharp\omega^*\to \omega^*$. Since $\omega^*\G_m\in \logCI(k,\Z)$, the map factors through a map $M^{\rm eff}(\P^1,\triv)[-1]\to \omega^*\G_m$, and by taking $\pi_0$ we get a map
	\begin{equation}\label{eq:taut-bundle-pi1}
		h_1^{\bcube}(\P^1,\triv)\to \omega^*\G_m.
	\end{equation}
	\begin{prop}\label{prop:h0-gm}
		The map \eqref{eq:taut-bundle-pi1} is an isomorphism.
		\begin{proof} By e.g. \cite[Proposition 5.12]{BindaMerici}, the map above factors through $\omega^*h_1^{\A^1}(\P^1)\to \omega^*\G_m$, which is an isomorphism by \cite[Lecture 4]{MVW}, hence it is enough to prove that $h_1^{\bcube}(\P^1,\triv)\cong \omega^*h_1^{\A^1}(\P^1)$. By the long exact sequences induced by Mayer--Vietoris we have a commutative diagram where the rows are exact sequences:\[
			\begin{tikzcd}
				0\ar[r]&h_1^{\bcube}(\P^1,\triv)\ar[r]\ar[d,"(*1)"] &h_0^{\bcube}(\P^1,0+\infty)\ar[d,"(*2)"]\ar[r]&h_0^{\bcube}(\bcube)^{\times 2}\ar[d,equal]\\
				0\ar[r]&\omega^*h_1^{\A^1}(\P^1)\ar[r] &\omega^*h_0^{\A^1}(\A^1-\{0\})\ar[r]&\omega^*h_0^{\A^1}(\A^1)^{\times 2}.
			\end{tikzcd}\]
			By \cite[Lemma 4.1]{MVW}, we have $h_1^{\A^1}(\A^1-\{0\})=0$, so by Proposition \ref{prop:h0-X} we conclude that the map $(*2)$ above is an isomorphism, so $(*1)$ is an isomorphism too.
		\end{proof}
	\end{prop}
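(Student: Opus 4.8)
The plan is to reduce the statement to the analogous computation on the classical, $\A^1$-invariant side via the comparison functor $\omega^*$, using Proposition \ref{prop:h0-X} as the key input and comparing the Mayer--Vietoris long exact sequences for $\P^1$ in the two settings. The first step is to record a factorization of the map \eqref{eq:taut-bundle-pi1}: by construction \eqref{eq:comp-omega-bundle} it is $\pi_0$ of the map $M^{\rm eff}(\P^1,\triv)[-1]\to \omega^*\G_m$ coming from the tautological bundle, and by \cite[Proposition 5.12]{BindaMerici} this factors through the canonical comparison map $h_1^{\bcube}(\P^1,\triv)\to \omega^* h_1^{\A^1}(\P^1)$ followed by $\omega^*$ applied to the classical isomorphism $h_1^{\A^1}(\P^1)\xrightarrow{\sim}\G_m$ of \cite[Lecture 4]{MVW}. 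Hence it suffices to prove that $h_1^{\bcube}(\P^1,\triv)\to \omega^* h_1^{\A^1}(\P^1)$ is an isomorphism.

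To prove this I would compare two Mayer--Vietoris sequences. On the log side, the square \eqref{eq:MV-P1} is cartesian in $\logDM(k,\Z)$ by \cite[Proposition 2.4.9]{BPO-SH}; since $M^{\rm eff}(\P^1,0)\simeq M^{\rm eff}(\P^1,\infty)\simeq\Z[0]$ (via the inclusion of $[1:1]$) and hence $h_1^{\bcube}(\P^1,0)=h_1^{\bcube}(\P^1,\infty)=0$, the associated long exact sequence begins
\[ 0\to h_1^{\bcube}(\P^1,\triv)\to h_0^{\bcube}(\P^1,0+\infty)\to h_0^{\bcube}(\bcube)^{\oplus 2}. \]
On the classical side, the standard open cover $\P^1=\A^1\cup\A^1$ with intersection $\A^1-\{0\}$ produces a Mayer--Vietoris triangle whose sequence, using $h_1^{\A^1}(\A^1)=0$ and $h_1^{\A^1}(\A^1-\{0\})=0$ by \cite[Lemma 4.1]{MVW}, begins
\[ 0\to h_1^{\A^1}(\P^1)\to h_0^{\A^1}(\A^1-\{0\})\to h_0^{\A^1}(\A^1)^{\oplus 2}, \]
and stays exact after applying the exact functor $\omega^*$. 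Functoriality of the adjunction \eqref{omegaadjunction} arranges these two left-exact sequences into a commutative ladder whose vertical arrows are the comparison maps, the rightmost being the identification $h_0^{\bcube}(\bcube)^{\oplus 2}=\Z^{\oplus 2}=\omega^* h_0^{\A^1}(\A^1)^{\oplus 2}$.

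The decisive point is the middle vertical arrow $h_0^{\bcube}(\P^1,0+\infty)\to \omega^* h_0^{\A^1}(\A^1-\{0\})$. Since $\ul{\P^1}$ is proper and $(\P^1,0+\infty)^{\circ}=\P^1-\{0,\infty\}\cong\A^1-\{0\}$, Proposition \ref{prop:h0-X} gives that this arrow is an isomorphism. A diagram chase (the five lemma applied to the two left-exact rows, the outer verticals being isomorphisms) then forces the left vertical arrow $h_1^{\bcube}(\P^1,\triv)\to \omega^* h_1^{\A^1}(\P^1)$ to be an isomorphism, which concludes the argument.

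I expect the main obstacle to be organizational rather than conceptual: one must verify that the map produced by the Mayer--Vietoris comparison is genuinely the comparison map through which \eqref{eq:taut-bundle-pi1} factors — this is exactly where \cite[Proposition 5.12]{BindaMerici} and the compatibility of the log and classical Mayer--Vietoris squares under $\omega$ are used — and that Proposition \ref{prop:h0-X} applies to $(\P^1,0+\infty)\in\SmlSm(k)$ with the stated identification of its interior. Once these compatibilities are pinned down, no further geometric input is needed.
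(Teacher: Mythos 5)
Your proposal is correct and follows essentially the same route as the paper: factor the map through the comparison morphism $h_1^{\bcube}(\P^1,\triv)\to\omega^*h_1^{\A^1}(\P^1)$ via \cite[Proposition 5.12]{BindaMerici} and \cite[Lecture 4]{MVW}, then compare the log and classical Mayer--Vietoris sequences, using Proposition \ref{prop:h0-X} applied to $(\P^1,0+\infty)$ to identify the middle terms and a diagram chase to conclude. The only cosmetic difference is that you invoke $h_1^{\A^1}(\A^1)=0$ alongside $h_1^{\A^1}(\A^1-\{0\})=0$ for the left-exactness of the classical row, which is if anything a slightly cleaner justification than the paper's.
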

	\begin{remark}
		The computation of weight one motivic cohomology of \cite[Section 4]{MVW}, i.e. that $h_0^{\A^1}(\A^1-\{0\})=\Z\oplus \G_m$ and $h_1^{\A^1}(\A^1-\{0\})=0$, is crucial for applying Proposition \ref{prop:h0-X}. One would be tempted to generalize the result above to the case without transfers: we have by \cite[Theorem 3.37]{Morel} that $\pi_1^{\A^1,{\rm mot}}(\P^1)\cong \sK_1^{\rm MW}$, where the right hand side is the Milnor--Witt $K$-theory sheaf. On the other hand $\Z(\A^1-\{0\})\to \sK_1^{\rm MW}$ is not surjective, so the result does not immediately follow in this case. We leave this (apparently much harder) computation to a future work.
	\end{remark}

    \section{Proof of \texorpdfstring{\cite[Conjecture 0.2]{BindaMericierratum}}{[BM22, Conjecture 0.2]} in the case with transfers}\label{appendix-conjecture}
	
	In this section, we will deduce the aforementioned conjecture from \cite[Theorem 3.26]{mericicrys} (see Theorem \ref{thm:Gysin}). The key point is to show that in this situation the map $\mathrm{Gys}_0$ is already defined in $\Sh_{\Nis}^{\tr}(k,\Z)$. We fix the map $\tau\colon \Z_{\tr}(\P^1)[-1]\to \G_m$ induced by the tautological bundle and the associated map $\tilde \tau\colon \Z_{\ltr}(\P^1,\triv)[-1]\to \omega^*\G_m$.
	\begin{lemma}\label{lemma:trick}
		Let $F\in \logCI(k,\Z)$. The maps $\tau$ and $\tilde \tau$ induce isomorphism\[
		\uHom_{\Sh_{\Nis}^{\tr}}(\G_m,\omega_\sharp F)\cong a_{\Nis}H^1(\P^1\times-,\omega_\sharp F)\cong \omega_\sharp\uHom_{\Sh_{\dNis}^{\ltr}}(\omega^*\G_m,F).
		\]
	\end{lemma}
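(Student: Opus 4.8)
The plan is to establish the two isomorphisms separately, exploiting the fact that $\tau$ and $\tilde\tau$ are induced by the tautological bundle and are therefore compatible with the adjunction $\omega^\sharp \dashv \omega_\sharp$. For the right-hand isomorphism, I would first recall that $\omega^*\G_m \in \logCI(k,\Z)$ (noted in the excerpt, since $\G_m \in \HI(k,\Z)$) and that, by Proposition \ref{prop:h0-gm}, the map $\tilde\tau$ realizes $\omega^*\G_m$ as $h_1^{\bcube}(\P^1,\triv)$, i.e.\ after applying $L_{(\dNis,\bcube)}$ and shifting we get $M^{\rm eff}(\P^1,\triv)[-1] \to \omega^*\G_m$ inducing an iso on $\pi_1$. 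Since $h_0^{\bcube}(\P^1,\triv) = \Z$ with the class of a rational point splitting off, the reduced motive $\widetilde{M}^{\rm eff}(\P^1,\triv)$ is, after the shift, concentrated so that $\omega^*\G_m$ computes its relevant homotopy sheaf; concretely $\Map(\widetilde M^{\rm eff}(\P^1,\triv)[-1], F) \simeq \Hom_{\logCI}(\omega^*\G_m, F) = \uHom_{\Sh_{\dNis}^{\ltr}}(\omega^*\G_m, F)(k)$ internally, and applying $\omega_\sharp$ (which is exact and, being a right adjoint on the heart, commutes with the relevant $\uHom$) gives the right-hand term. On the other hand this same mapping space, by the very definition of $M^{\rm eff}$ and the fact that $F \in \logCI$ so $R\Gamma(-,F)$ is $\bcube$-invariant, computes $a_{\dNis} H^1(\P^1 \times -, F)$, and applying $\omega_\sharp$ turns $\partial X = \triv$ everywhere and yields $a_{\Nis}H^1(\P^1 \times -, \omega_\sharp F)$: this is the middle term.

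For the left-hand isomorphism I would argue symmetrically on the classical side: the map $\tau \colon \Z_{\tr}(\P^1)[-1] \to \G_m$ exhibits $\G_m \cong h_1^{\A^1}(\P^1)$ (by \cite[Lecture 4]{MVW}, as already invoked in the proof of Proposition \ref{prop:h0-gm}), and $h_0^{\A^1}(\P^1) = \Z$ with a rational point splitting, so $\widetilde{M}_{\A^1}(\P^1)[-1] \simeq \G_m$ in $\DM^{\rm eff}(k,\Z)$ after truncation. Hence $\uHom_{\Sh_{\Nis}^{\tr}}(\G_m, \omega_\sharp F)$ is identified with $a_{\Nis}\uHom_{\DM}(\widetilde M_{\A^1}(\P^1 \times -)[-1], \omega_\sharp F) = a_{\Nis}H^1(\P^1 \times -, \omega_\sharp F)$, using that $\omega_\sharp F$ need \emph{not} be $\A^1$-invariant — so I must be careful here: the point is that we are mapping \emph{out} of $M_{\A^1}(\P^1)$ which, for the internal-hom computation, only requires $\P^1$ to be proper and the motive of $\G_m$-type, and the cohomology $H^1(\P^1\times-,\omega_\sharp F)$ with Nisnevich sheafification absorbs the non-$\A^1$-invariance. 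More precisely, since $\P^1$ is proper and $1$-dimensional, $H^q(\P^1_{-}, \omega_\sharp F)$ vanishes for $q > 1$ for any Nisnevich sheaf with transfers, and the complex $\Z_{\tr}(\P^1)[-1] \to \G_m$ being an iso on the relevant part of the motive identifies the two. I would therefore phrase the left isomorphism as: $\tau$ induces $\Z_{\tr}(\P^1) \simeq \Z \oplus \G_m[1]$ in $\D(\Sh_{\Nis}^{\tr})$ after $\A^1$-localization, but in fact the splitting $\Z_{\tr}(\P^1) \cong \Z \oplus \Z(1)[2]$ holds already in $\DM^{\rm eff}$ and hence $\uHom$ out of it against any complex of Nisnevich sheaves with transfers (not necessarily $\A^1$-local) picks out $H^1(\P^1\times -,-)$ in the $\G_m$-component.

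The main obstacle I expect is the left-hand isomorphism, precisely because $\omega_\sharp F$ is not $\A^1$-invariant, so one cannot simply quote $\DM^{\rm eff}$ formalism. The cleanest fix is to use the motivic cell decomposition $\Z_{\tr}(\P^1) \simeq \Z \oplus \Z_{\tr}(\G_m)/\Z \simeq \Z \oplus \G_m[1]$ valid at the level of complexes of Nisnevich sheaves with transfers \emph{before} $\A^1$-localization — this follows from the standard Mayer--Vietoris / projective bundle argument for $\P^1 = \A^1 \cup \A^1$ glued along $\G_m$, which does not need $\A^1$-invariance, only that $\Z_{\tr}(\A^1)$ is a complex whose cohomology presheaves one can control, together with the fact that $\G_m$ is already a sheaf. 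Given that decomposition, $\uHom_{\Sh_{\Nis}^{\tr}}(\G_m, \omega_\sharp F) = \uHom(\cofib(\Z \to \Z_{\tr}(\P^1))[-1], \omega_\sharp F)$ and one reads off $a_{\Nis}H^1(\P^1\times-,\omega_\sharp F)$ from the hypercohomology spectral sequence, which degenerates because $\P^1$ has cohomological dimension $1$ for Nisnevich sheaves. Finally I would check that the composite of the two isomorphisms is compatible with $\omega^\sharp \tau = \tilde\tau|_{\text{before sheafification}}$, which is formal from the construction of $\tilde\tau$ in \eqref{eq:comp-omega-bundle} and the adjunction $(\omega^\sharp, \omega_\sharp)$ together with $\omega_\sharp \omega^* = \mathrm{id}$ on $\Sh_{\Nis}^{\tr}$.
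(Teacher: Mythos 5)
Your treatment of the right-hand isomorphism is essentially the paper's: both rest on Proposition \ref{prop:h0-gm} together with $h_0^{\bcube}(\P^1,\triv)=\Z$ (the paper runs this through the spectral sequence $\Ext^i(h_j^{\bcube}(\P^1,\triv),F)\Rightarrow H^{i+j}(\P^1,\omega_\sharp F)$ and the vanishing of $\Ext^{\geq 1}(\Z,F)$). The gap is in the left-hand isomorphism, and your proposed fix fails: there is no decomposition $\Z_{\tr}(\P^1)\simeq \Z\oplus\G_m[1]$ in $\D(\Sh_{\Nis}^{\tr}(k,\Z))$ \emph{before} $\A^1$-localization. Mayer--Vietoris for $\P^1=\A^1\cup\A^1$ only produces the cofiber sequence $\Z_{\tr}(\G_m)\to\Z_{\tr}(\A^1)^{\oplus 2}\to\Z_{\tr}(\P^1)$; contracting $\Z_{\tr}(\A^1)$ to $\Z$ and identifying $\Z_{\tr}(\G_m)/\Z$ with the sheaf $\G_m$ in degree $0$ are statements about the Suslin complex, not about the representable sheaves (the map $\Cor(X,\G_m)\to\Z(X)\oplus\sO^*(X)$ has an enormous kernel, and $h^{\A^1}_{>0}(\A^1)=0$ is a statement after $\A^1$-localization). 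If such a pre-localization splitting existed, the identification $\uHom(\G_m,\omega_\sharp F)\cong a_{\Nis}H^1(\P^1\times-,\omega_\sharp F)$ would hold for an arbitrary Nisnevich sheaf with transfers in place of $\omega_\sharp F$, which is false; the lemma is nontrivial precisely because $\omega_\sharp F$ is not $\A^1$-invariant, so your argument begs the question you yourself flagged as the main obstacle.

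What is actually needed, and what the paper supplies, is this: the three comparison maps fit into a commutative triangle in which the (already established) isomorphism $\Hom_{\Sh_{\dNis}^{\ltr}}(\omega^*\G_m,F)\to H^1(\P^1,\omega_\sharp F)$ factors through $\Hom_{\Sh_{\Nis}^{\tr}}(\G_m,\omega_\sharp F)$, so it suffices to prove that $\Hom_{\Sh_{\Nis}^{\tr}}(\G_m,\omega_\sharp F)\to H^1(\P^1,\omega_\sharp F)$ is \emph{injective}. The paper does this by embedding $F$ into the tower $F_n:=(h^0_{\bcube}\omega^*\omega_\sharp\iota)^{\circ n}F$ (each $F\to F_n$ is injective by purity, \eqref{eq:purity-omega}, and left exactness of $h^0_{\bcube}$) and observing that, by adjunction, the comparison map $\Hom(\omega^*\G_m,F_n)\to\Hom(\G_m,\omega_\sharp F_n)$ is identified with the transition map $\Hom(\omega^*\G_m,F_n)\to\Hom(\omega^*\G_m,F_{n+1})$ of the tower, hence becomes an isomorphism after taking $\colim_n$; injectivity then follows by a diagram chase using that all the vertical maps into the colimit are injective. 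This colimit trick is the genuinely non-formal ingredient of the proof, and your proposal contains no substitute for it.
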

	\begin{proof}
		Let $X\in \Sm(k)$. Then we have
		\begin{align*}
			\uHom_{\Sh_{\Nis}^{\tr}}(\G_m,\omega_\sharp F)(X) &= \Hom_{\Sh_{\Nis}^{\tr}}(\G_m,\uHom_{\Sh_{\Nis}^{\tr}}(\Z_{\tr}(X),\omega_\sharp F) ) \\
			&=  \Hom_{\Sh_{\Nis}^{\tr}}(\G_m,\omega_{\sharp}\uHom_{\Sh_{\dNis}^{\ltr}}(\Z_{\ltr}(X,\triv),F))
		\end{align*}
		and\[
		\omega_{\sharp}\uHom_{\Sh_{\dNis}^{\ltr}}(\omega^*\G_m,F)(X) = \Hom_{\Sh_{\dNis}^{\ltr}}(\omega^*\G_m,\uHom_{\Sh_{\dNis}^{\ltr}}(\Z_{\ltr}(X,\triv),F)).
		\]
		Since $\uHom_{\Sh_{\dNis}^{\ltr}}(\Z_{\ltr}(X,\triv),F) \in \logCI(k,\Z)$ (see e.g. \cite[Lemma 2.10]{BindaMerici}) and every map $\omega_\sharp F\to \omega_\sharp G$ induces a map
        \begin{align*}
        &\omega_\sharp \uHom_{\Sh_{\dNis}^{\ltr}}(\Z_{\ltr}(X,\triv),F) = \uHom_{\Sh_{\Nis}^{\tr}}(\Z_{\tr}(X),\omega_\sharp F)\\
        &\to \uHom_{\Sh_{\Nis}^{\tr}}(\Z_{\tr}(X),\omega_\sharp G) =  \omega_\sharp \uHom_{\Sh_{\dNis}^{\ltr}}(\Z_{\ltr}(X,\triv),G),    
        \end{align*}
        up to replacing $F$ with $\uHom_{\Sh_{\Nis}^{\tr}}(\Z_{\ltr}(X,\triv),F)$, we are left to prove that the maps
        \begin{equation}\label{eq:1-referee}
		\Hom_{\Sh_{\Nis}^{\tr}}(\G_m,\omega_\sharp F)\to H^1(\P^1,\omega_\sharp F)\leftarrow \Hom_{\Sh_{\dNis}^{\ltr}}(\omega^*\G_m, F)
        \end{equation}
        are isomorphisms. 
        
        By construction the map $\tilde \tau$ is the composition $\Z_{\ltr}(\P^1,\triv)[-1]\xrightarrow{\omega^\sharp \tau} \omega^\sharp \G_m \to \omega^*\G_m$ of \eqref{eq:comp-omega-bundle}.         For all $F\in \Sh^{\ltr}_{\dNis}(k,\Z)$ it induces a commutative diagram functorial in $F$:
        \[
        \begin{tikzcd}
            H^1(\P^1,\omega_\sharp F)&&\Hom_{\Sh_{\Nis}^{\tr}}(\G_m,\omega_\sharp F)\ar[ll,"(*2)"']\\
            &\Hom_{\Sh_{\dNis}^{\ltr}}(\omega^*\G_m,F)\ar[ul,"(*1)"]\ar[ur]
        \end{tikzcd}
        \]
        We first prove that the map $(*1)$ is an isomorphism. Indeed by Proposition \ref{prop:h0-gm} we have that the truncation of the map \eqref{eq:comp-omega-bundle} induces an isomorphism \[
        \psi\colon\Hom_{\Sh_{\dNis}^{\ltr}}(\omega^*\G_m,F)\cong \Hom_{\Sh_{\dNis}^{\ltr}}(h_1^{\bcube}(\P^1,\triv),F).\] 
        Moreover, we have the spectral sequence \[
			E_{2}^{ij}=\Ext^i_{\Sh_{\dNis}^{\ltr}}(h_j^{\bcube}(\P^1,\triv),F)\Rightarrow H^{i+j}(\P^1,\omega_\sharp F)
			\]
            that gives the exact sequence\[
        \begin{aligned}
            &0\to \Ext^1(h_0^{\bcube}(\P^1,\triv),F)\to H^{1}(\P^1,\omega_\sharp F)\xrightarrow{\theta} \Hom(h_1^{\bcube}(\P^1,\triv),F)\to \Ext^2(h_0^{\bcube}(\P^1,\triv),F)
            \end{aligned}
			\]
			We have that $h_0^{\bcube}(\P^1,\triv)=\Z$ and $\Z[0]=M^{\rm eff}(\Spec(k))$, so\[
			\Ext^i(h_0^{\bcube}(\P^1,\triv),F)(X) = H^i(\Spec(k),F) = 0\qquad\textrm{for }i\geq 1,
			\]
			which implies that the map $\theta$ above is an isomorphism given by the truncation, so the composition $\theta^{-1}\circ\psi$ gives $(*1)$.
            
            This implies that in order to show that $(*2)$ above is an isomorphism it is enough to check that it is injective. 
        
        Let $h^0_{\bcube}\omega^*$ be the right adjoint of $\omega_\sharp\iota \colon \logCI(k,\Z)\to \Sh_{\Nis}^{\ltr}(k,\Z)$ (see Section \ref{ssec:logDM} for notation): for all $F\in \logCI(k,\Z)$ the map $F\to h^0_{\bcube}\omega^*\omega_\sharp \iota F$ is injective, as $\iota F\to \omega^*\omega_\sharp \iota F$ is injective, see \eqref{eq:purity-omega}, $h^0_{\bcube}$ is left exact  and $h^0_{\bcube}\iota = \id$ since $\iota$ is fully faithful. Let \[
        F_n:= (h^0_{\bcube}\omega^*\omega_\sharp\iota )^{\circ n} F.\] 
        we have that $F\to F_n$ is injective, so we have a commutative diagram where the vertical maps are injective:
        \[
        \begin{tikzcd}
            H^1(\P^1,\omega_\sharp F)\ar[dd,hook]&&\Hom_{\Sh_{\Nis}^{\tr}}(\G_m,\omega_\sharp F)\ar[ll,near start,"(*2)"']\ar[dd,hook]\\
            &\Hom_{\Sh_{\dNis}^{\ltr}}(\omega^*\G_m,F)\ar[ul,equal]\ar[ur,"(*3)"']\\
            \colim_{n\in \N} H^1(\P^1,\omega_\sharp F_n)&&\colim_{n\in \N} \Hom_{\Sh_{\Nis}^{\tr}}(\G_m,\omega_\sharp F_n).\ar[ll,near start,"(*4)"']\\
            &\colim_{n\in \N} \Hom_{\Sh_{\dNis}^{\ltr}}(\omega^*\G_m,F_n)\ar[ul,equal]\ar[ur,"(*5)"']
            \ar[from = 2-2,hook,crossing over]
        \end{tikzcd}
        \]
        In order to check that $(*2)$ is injective, it is enough to check that the map $(*5)$ above is an isomorphism: indeed, if $(*5)$ is an isomorphism then $(*4)$ is an isomorphism too, so since all the vertical arrows are injective we conclude that $(*2)$ is injective too. On the other hand, we have that the map\[
        \Hom_{\Sh_{\dNis}^{\ltr}}(\omega^*\G_m,F_n)\to \Hom_{\Sh_{\Nis}^{\tr}}(\G_m,\omega_\sharp F_n)
        \]
        equals by adjunction \[
        \Hom_{\Sh_{\dNis}^{\ltr}}(\omega^*\G_m,F_{n}) \to \Hom_{\Sh_{\dNis}^{\ltr}}(\omega^*\G_m,h^0_{\bcube}\omega^*\omega_\sharp\iota  F_{n}) = \Hom_{\Sh_{\Nis}^{\tr}}(\omega^*\G_m,F_{n+1}),
         \]
        therefore the map is just the boundary map $\Hom_{\Sh_{\dNis}^{\ltr}}(\omega^*\G_m,F_{n})\to \Hom_{\Sh_{\dNis}^{\ltr}}(\omega^*\G_m,F_{n+1})$, which by taking colimits on $n$ is an isomorphism. 
        \end{proof} 
        We are now ready to prove our key result. Let $S,T\in \Sh_{\Nis}^{\tr}(k,\Z)$: recall that $\Map(S,T)$ is coconnected, therefore the truncation gives a map \[
        \Hom(S,T)\simeq\tau_{\geq 0} \Map(S,T)\to \Map(S,T),
        \]
        which is functorial for all maps $T\to T'$.
    \begin{lemma}\label{lem:gysin-is-downstairs}
        Let $F\in \logCI(k,\Z)$, $K$ a function field over $k$ and $U\subseteq \A^1_{K}$ a neighborhood of $\{0\}$. After the identification of Lemma \ref{lemma:trick}, the map \[\mathrm{Gys}_0^F(U)\colon H^1(\P^1_{K},\omega_\sharp F)\to \omega_\sharp F(U)[1]\] agrees with the composition \[
        \Hom(\G_{m}\otimes\Z_{\tr}(K),\omega_\sharp F)\to \Map(\G_{m}\otimes\Z_{\tr}(K),\omega_\sharp F)\xrightarrow{\alpha} \Map(\Z_{\tr}(\P^1_{K}),\omega_\sharp F)[1]\xrightarrow{\beta} \omega_\sharp F(U)[1]
        \]
        where $\alpha$ is the map induced by $\tau\colon \Z_{\tr}(\P^1_K)[-1]\to \G_m\otimes \Z_{\tr}(K)$ 
        and $\beta$ is the map induced by $\Z_{\tr}(U)[-1]\to \Z_{\tr}(\P^1_{K})[-1]$. 
    \end{lemma}
    \begin{proof}
    \def\uMap{\underline{\Map}}
 By the choice of a trivialization of the tautological bundle on $\P^1-\{0\}$, the map $\tau$ factors through \[
    \delta\colon \Z_{\tr}(\P^1)/\Z_{\tr}(\P^1-\{0\})[-1]\to \G_m,\] 
and we have a commutative diagram \[
\begin{tikzcd}
\omega^\sharp \Z_{\tr}(\P^1)\ar[r]\ar[d,"\cong"]&
\dfrac{\omega^\sharp \Z_{\tr}(\P^1)}{\omega^\sharp \Z_{\tr}(\P^1-\{0\})}\ar[d]\\
\Z_{\ltr}(\P^1,\triv)\ar[r]&\dfrac{\Z_{\ltr}(\P^1,\triv)}{\Z_{\ltr}(\P^1,0)}
\end{tikzcd}
\]
    Then the truncation maps together with the isomorphisms of Lemma \ref{lemma:trick} fit in the following commutative diagram.
    \begin{equation}\label{eq:lemma-trick-last}
    \begin{tikzcd}
         a_{\Nis}H^1(\P^1\times-,\omega_\sharp F)\ar[d]\ar[r,"\cong"]&\uHom(\G_m,\omega_\sharp F)\ar[r]&\uMap(\G_m,\omega_\sharp F)\ar[dl,"F(\delta)"]\ar[d,"F(\tau)"]\\
      \omega_\sharp\uMap(\dfrac{\Z_{\ltr}(\P^1,\triv)}{\Z_{\ltr}(\P^1,0)}[-1],F) \ar[r]&\uMap(\dfrac{\Z_{\tr}(\P^1)}{\Z_{\tr}(\P^1-\{0\})}[-1],\omega_\sharp F)\ar[r]&\uMap({\Z_{\tr}(\P^1)}[-1],\omega_\sharp F)
    \end{tikzcd}
    \end{equation}
    
    We have the commutative diagram obtained by applying $\omega^\sharp\omega_\sharp$ to \eqref{eq:gysin-pushout} :
            \[
            \begin{tikzcd}
            &&\omega^\sharp\Z_{\tr}(\P^1_{K})\ar[dr,"(*3)"]\ar[ddd,near start,"\epsilon_{\P^1_K}"]\\
                \omega^{\sharp }\Z_{\ltr}(U)\ar[dr,"(*2)"']\ar[urr,near start,"(*1)"]\ar[ddd,near start,"\epsilon_{U}"']&&&\dfrac{\omega^\sharp\Z_{\tr}(\P^1_{K})}{\omega^\sharp\Z_{\tr}(\P^1_{K}-\{0\})}\ar[ddd,near start, "\epsilon_{(\P^1_K,0)}"]\\
                &\dfrac{\omega^\sharp\Z_{\tr}(U)}{\omega^\sharp\Z_{\tr}(U-\{0\})}\ar[urr, crossing over,near end,"(*4)"']\\
                &&\Z_{\ltr}(\P^1_{K},\triv)\ar[dr,"(*7)"]\\
                \Z_{\ltr}(U,\triv)\ar[dr,"(*6)"']\ar[urr,near start,"(*5)"] &&& \dfrac{\Z_{\ltr}(\P^1_{K},\triv)}{\Z_{\ltr}(\P^1_{K},0)}\\
                &
                \dfrac{\Z_{\ltr}(U,\triv)}{\Z_{\ltr}(U,0)}\ar[urr,near end,"(*8)"']\ar[from=3-2, crossing over,near start,"\epsilon_{(U,0)}"],
            \end{tikzcd}            
            \]
By applying the functor $\Map((-)[-1],F)$ to the diagram above (and using the adjunction $\omega^\sharp\dashv \omega_\sharp$) and combining it with \eqref{eq:lemma-trick-last} evaluated at $K$, we have the following commutative diagram:
     \[\begin{tikzcd}
         H^1(\P^1_K,\omega_\sharp F)\ar[dd,"\cong"]\ar[r,"\textrm{Lemma \ref{lemma:trick}}","\cong"']&\Hom(\G_m\otimes\Z_{\tr}(K),\omega_\sharp F)\ar[d]\ar[dddr,bend left=15, "\alpha"]\\
         &\Map(\G_m\otimes\Z_{\tr}(K),\omega_\sharp F)\ar[d,"F(\delta)"]\\
      \Map\Bigl(\dfrac{\Z_{\ltr}(\P^1_K,\triv)}{\Z_{\ltr}(\P^1_K,0)}[-1],F\Bigr) \ar[r,"F(\epsilon_{(\P^1,0)})"]\ar[d,"F(*8)"]&\Map\Bigl(\dfrac{\Z_{\tr}(\P^1_K)}{\Z_{\tr}(\P^1_K-\{0\})}[-1],\omega_\sharp F\Bigr)\ar[d,"F(*4)"]\ar[dr,bend left=10,"F(*3)"']\\
      \Map\Bigl(\dfrac{\Z_{\ltr}(U,\triv)}{\Z_{\ltr}(U,0)}[-1],F\Bigr)\ar[r,"F(\epsilon_{(U,0)})"]\ar[d,"F(*6)"]&\Map\Bigl(\dfrac{\Z_{\tr}(U)}{\Z_{\tr}(U-\{0\})}[-1],\omega_\sharp F\Bigr)\ar[d,"F(*2)"]&\Map({\Z_{\tr}(\P^1_K)}[-1],\omega_\sharp F)\ar[dl,bend left=10,"F(*1)"']\ar[ddl,bend left = 10,"\beta"]\\
      \Map({\Z_{\ltr}(U,\triv)}[-1],F)\ar[dr,"\simeq"]\ar[r,"F(\epsilon_{U})"]&\Map({\Z_{\tr}(U)}[-1],\omega_\sharp F)\ar[d,"\simeq"]\\
      &\omega_\sharp F(U)[1]
    \end{tikzcd}
     \]
     Then by construction (see \eqref{eq:Thom}) the composition through the left vertical map is $\mathrm{Gys}_0^F(U)$, so we conclude.
    \end{proof}
 \begin{remark}\label{rmk:funct}
 Let $F,G\in \logCI(k,\Z)$ and let $\phi\colon \omega_\sharp F\to \omega_\sharp G$. We have that
 \begin{enumerate}
 \item The map in Lemma \ref{lemma:trick} fits into a commutative diagram of abelian groups\[
	\begin{tikzcd} 
 H^1(\P^1_K,\omega_\sharp F)\ar[rr,"\rm Lemma\ \ref{lemma:trick}","\cong"']\ar[d,"{H^1(\P^1_K,\phi)}"]&&\Hom(\G_m\otimes\Z_{\tr}(K),\omega_\sharp F)\ar[d,"{\Hom(\G_m\otimes\Z_{\tr}(K),\phi)}"]\\
 H^1(\P^1_K,\omega_\sharp G)\ar[rr,"\rm Lemma\ \ref{lemma:trick}","\cong"']&&\Hom(\G_m\otimes\Z_{\tr}(K),\omega_\sharp G)
\end{tikzcd} 
 \]
 as it is the inverse of the map $\Hom(\G_m\otimes\Z_{\tr}(K),-)\to  H^1(\P^1_K,-)$ induced by $\tau$.
 \item The maps involved in the composition of $\alpha$ and $\beta$ are functorial for $\omega_\sharp F\to \omega_\sharp G$ by construction, therefore there is a commutative diagram in $\cD(\mathbf{Ab})$:\[
 \begin{tikzcd}[column sep=small]
 \Hom(\G_m\otimes\Z_{\tr}(K),\omega_\sharp F)\ar[d,"{\Hom(\G_m\otimes\Z_{\tr}(K),\phi)}"]\ar[r]&\Map(\G_m\otimes\Z_{\tr}(K),\omega_\sharp F)\ar[r,"\alpha"]\ar[d,"{\Map(\G_m\otimes\Z_{\tr}(K),\phi)}"]&\Map(\Z_{\tr}(\P^1)[-1],\omega_\sharp F)\ar[r,"\beta"]\ar[d,"{\Map(\Z_{\tr}(\P^1)[-1],\phi)}"]&\omega_\sharp F(U)[1]\ar[d,"\phi_{\A^1}(U)"]\\
 \Hom(\G_m\otimes\Z_{\tr}(K),\omega_\sharp F)\ar[r]\ar[r]&\Map(\G_m\otimes\Z_{\tr}(K),\omega_\sharp F)\ar[r,"\alpha"]&\Map(\Z_{\tr}(\P^1)[-1],\omega_\sharp G)\ar[r,"\beta"]&\omega_\sharp F(U)[1]
 \end{tikzcd}
 \]
 \item All the maps above are functorial for open immersions $0\in V\subseteq U\subseteq \P^1_K$ by construction.
  \end{enumerate}
 \end{remark}
	We are ready to settle \cite[Conjecture 0.2]{BindaMericierratum} in the case with transfers:
	\begin{thm}\label{thm:almost-conj}
		Let $F,G\in \logCI(k,\Z)$, and let $\phi\colon \omega_\sharp F\to \omega_\sharp G$ be a map in $\Sh_{\Nis}^{\tr}$. Then $\phi$ lifts uniquely to a map $F\to G$ in $\logCI(k,\Z)$. In particular the functor $
        \logCI(k,\Z)\xrightarrow{\omega_\sharp\iota} \Sh_{\Nis}^{\tr}$
        is fully faithful and exact and, for all $F\in \logCI(k,\Z)$, we have $h_0^{\bcube}\omega^\sharp\omega_\sharp\iota F\cong F$.
		\begin{proof}
            By Lemma \ref{lem:gysin-is-downstairs} and Remark \ref{rmk:funct}, the maps $\mathrm{Gys}_0^F$ and $\mathrm{Gys}_0^G$ fit in the commutative diagram of Theorem \ref{thm:Gysin}, therefore there exists a unique $\tilde{\phi}$ lifting $\phi$, therefore the (exact) functor $\logCI(k,\Z)\xrightarrow{\omega_\sharp\iota}\Sh_{\Nis}^{\tr}(k)$ is fully faithful. Finally, $\omega_\sharp\iota$ has a left adjoint given by $h_0^{\bcube}\omega^\sharp$, so the last part follows from fully faithfulness. 
		\end{proof}
	\end{thm}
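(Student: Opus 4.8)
The plan is to combine the results of the preceding sections; almost everything has been set up so that the theorem itself is a short formal deduction. Faithfulness and exactness of $\omega_\sharp\iota$ are already available from Theorem~\ref{thm:conserva} (this is \cite[Proposition~0.1]{BindaMericierratum}), so the only remaining point is fullness: given $F,G\in\logCI(k,\Z)$ and a map $\phi\colon\omega_\sharp F\to\omega_\sharp G$ of Nisnevich sheaves with transfers, one must produce a map $\widetilde\phi\colon F\to G$ in $\logCI(k,\Z)$ with $\omega_\sharp\widetilde\phi=\phi$. Uniqueness of the lift is then automatic from faithfulness, so one may speak of \emph{the} lift.

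First I would invoke Theorem~\ref{thm:Gysin}, which reduces the lifting problem to checking, for every function field $K/k$, that $\phi$ is compatible with the Gysin extension classes, i.e.\ that the square relating $\mathrm{Gys}_0^F$ and $\mathrm{Gys}_0^G$ commutes in $\cD(\Sh((\A^1_K)_{\Zar}))$. The crucial input is Lemma~\ref{lem:gysin-is-downstairs}: it rewrites $\mathrm{Gys}_0^F$ as a composite of morphisms that already live in $\cD(\Sh_{\Nis}^{\tr}(k,\Z))$, namely the canonical co-truncation map onto $\uHom(\G_m\otimes\Z_{\tr}(K),\omega_\sharp F)$, the map induced by the tautological-bundle morphism $\tau\colon\Z_{\tr}(\P^1)[-1]\to\G_m$, and the restriction along the open immersion $\ul j\colon\Z_{\tr}(U)[-1]\hookrightarrow\Z_{\tr}(\P^1_K)[-1]$. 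Since each of these is natural in the sheaf with transfers, $\mathrm{Gys}_0$ is functorial for arbitrary maps $\omega_\sharp F\to\omega_\sharp G$, in particular for $\phi$; hence the required square commutes automatically and Theorem~\ref{thm:Gysin} supplies $\widetilde\phi$. This establishes fullness, so $\omega_\sharp\iota$ is fully faithful, and it is exact by Theorem~\ref{thm:conserva}.

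For the final clause I would note that $\omega_\sharp\iota$ admits a left adjoint, namely the composite $h_0^{\bcube}\omega^\sharp$, obtained by stacking the adjunction $\omega^\sharp\dashv\omega_\sharp$ of \eqref{omegaadjunction} with the adjunction $h_0^{\bcube}\dashv\iota$ of \cite[Proposition~5.8]{BindaMerici}. A right adjoint is fully faithful exactly when the counit of the adjunction is an isomorphism; applying this to $h_0^{\bcube}\omega^\sharp\dashv\omega_\sharp\iota$ gives $h_0^{\bcube}\omega^\sharp\omega_\sharp\iota F\cong F$ for every $F\in\logCI(k,\Z)$.

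The genuinely hard work all sits upstream, in Lemma~\ref{lem:gysin-is-downstairs} together with Lemma~\ref{lemma:trick} and the computation $h_1^{\bcube}(\P^1,\triv)\cong\omega^*\G_m$ of Proposition~\ref{prop:h0-gm} (which itself rests on Proposition~\ref{prop:h0-X} and the verification of condition \eqref{star}). Once the Gysin extension class is known to be controlled entirely by data on $\Sh_{\Nis}^{\tr}(k,\Z)$, the theorem is only formal bookkeeping, so in writing the proof I would keep it brief and point to those lemmas.
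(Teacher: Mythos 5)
Your proposal is correct and follows essentially the same route as the paper's proof: faithfulness and exactness from Theorem \ref{thm:conserva}, fullness by combining Lemma \ref{lem:gysin-is-downstairs} (functoriality of $\mathrm{Gys}_0$ in $\Sh_{\Nis}^{\tr}(k,\Z)$) with Theorem \ref{thm:Gysin}, and the final isomorphism from the adjunction $h_0^{\bcube}\omega^\sharp\dashv\omega_\sharp\iota$ together with full faithfulness. Your identification of where the real work lies (upstream in Lemmas \ref{lemma:trick} and \ref{lem:gysin-is-downstairs}) matches the paper's presentation, which keeps this proof to a few lines.
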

		\bibliographystyle{alpha}
	\bibliography{bibMerici}
\end{document}